\documentclass[11pt,leqno]{amsart}
\usepackage{amssymb,mathrsfs,graphicx,color} 
\usepackage[all]{xy} 
\begin{document}
\theoremstyle{plain}
\newtheorem{thm}{Theorem}[section]
\newtheorem{prop}[thm]{Proposition}
\newtheorem{lemma}[thm]{Lemma}
\newtheorem{clry}[thm]{Corollary}
\newtheorem{definition}[thm]{Definition}
\newtheorem{hyp}{Assumption}
\newtheorem{claim}{Claim}
\newtheorem*{CSHyp}{Carleson-Sj\"olin Assumption}

\theoremstyle{definition}
\newtheorem{rem}[thm]{Remark}
\numberwithin{equation}{section}
\newcommand{\eps}{\varepsilon}
\newcommand{\e}{\mathrm{e}}
\renewcommand{\phi}{\varphi}
\renewcommand{\d}{\partial}
\newcommand{\dd}{\mathrm{d}}
\newcommand{\id}{\mathop{\rm Id} }
\newcommand{\re}{\mathop{\rm Re} }
\newcommand{\im}{\mathop{\rm Im}}
\newcommand{\R}{\mathbf{R}}
\newcommand{\T}{\mathbf{T}}
\renewcommand{\S}{\mathbf{S}}
\newcommand{\C}{\mathbf{C}}
\newcommand{\N}{\mathbf{N}} 
\newcommand{\Z}{\mathbf{Z}} 
\newcommand{\D}{\mathcal{C}^{\infty}_0} 
\newcommand{\supp}{\mathop{\rm supp}}
\newcommand{\grad}{\mathop{\rm grad}\nolimits}
\title[$L^p$ Resolvent estimates]{On $L^p$ resolvent estimates for  Laplace-Beltrami operators on compact manifolds}
\author[Dos Santos Ferreira, Kenig, Salo]{David Dos Santos Ferreira \and Carlos E.~Kenig  \and Mikko Salo }
\address{Universit\'e Paris 13, Cnrs, Umr 7539 Laga, 99 avenue Jean-Baptiste Cl\'ement, F-93430 Villetaneuse, France}
\email{ddsf@math.univ-paris13.fr}
\address{Department of Mathematics, University of Chicago, 5734 University Avenue, Chicago, IL 60637-1514, USA}
\email{cek@math.uchicago.edu}
\address{Department of Mathematics and Statistics, University of Helsinki and University of Jyv\"askyl\"a}
\email{mikko.salo@helsinki.fi}
\begin{abstract}
   In this article we prove $L^p$ estimates for resolvents of La-place-Beltrami operators on compact Riemannian manifolds, generalizing
   results of \cite{KRS} in the Euclidean case and \cite{Shen} for the torus. We follow \cite{Sog} and construct Hadamard's
   parametrix, then use classical boundedness results on integral operators with oscillatory kernels related to 
   the Carleson and Sj\"olin condition. Our initial motivation was to obtain $L^p$ Carleman estimates with limiting
   Carleman weights generalizing those of Jerison and Kenig \cite{JK}; we illustrate the pertinence of $L^p$ resolvent
   estimates by showing the relation with Carleman estimates. Such estimates are useful in the construction of complex
   geometrical optics solutions to the Schr\"odinger equation with unbounded potentials, an essential device for solving
   anisotropic inverse problems \cite{DKSa}.
\end{abstract}
\maketitle
\setcounter{tocdepth}{1} 
\tableofcontents
\hyphenation{para-metrix}
%
%
\begin{section}{Introduction}

This article aims at proving $L^p$ estimates on the resolvent of Laplace-Beltrami operators on compact Riemannian manifolds  
in the spirit of those obtained by Kenig, Ruiz and Sogge \cite{KRS} on the flat Euclidean space and of Shen \cite{Shen} on the flat torus. 
This work grew out of our recent interest in $L^p$ Carleman estimates with limiting Carleman weights 
generalizing Jerison and Kenig's estimates \cite{JK} to variable coefficients and used in the context of anisotropic inverse problems \cite{DKSa,DKSaU}.
It turned out that in \cite{DKSaU}, we were able to prove $L^p$ Carleman estimates in a direct way, without referring to resolvent
estimates. Meanwhile, in the process of obtaining those Carleman estimates, we were led to derive resolvent estimates.
Since these resolvent estimates might be helpful in different contexts, we decided to make them available.

Let $(M,g)$ be a compact Riemannian manifold of dimension $n \geq 3$ (without boundary). In local coordinates, the Laplace-Beltrami 
operator takes the form
\begin{align*}
    \Delta_g = \frac{1}{\sqrt{\det g}}  \frac{\d}{\d x_j} \bigg( \sqrt{\det g} \, g^{jk} \frac{\d}{\d x_k} \bigg)
\end{align*}
where $(g^{jk})_{1 \leq j,k \leq n}$ denotes the inverse matrix of $(g_{jk})_{1 \leq j,k \leq n}$ if $g=g_{jk} \,\dd x^j \otimes \dd x^k$.
As is classical, we are using Einstein's summation convention: repeated upper and lower indices are implicitly summed over all their possible values.
We will be using the notation
    $$ |\theta|_g = \sqrt{g_{jk} \theta^j \theta^k} $$
to denote the norm of a vector and $d_g$ to denote the geodesic distance in $(M,g)$. We denote $\sigma(-\Delta_g)$ the set of eigenvalues of the 
Laplace-Beltrami operator on $(M,g)$, and denote by
    $$ R(z) = (-\Delta_g+z)^{-1}, \quad z \in \C \setminus \sigma(-\Delta_g) $$
the resolvent operator. We are interested in proving estimates of the form
\begin{align*} 
      \|u\|_{L^{b}(M)} \leq C  \|(\Delta_{g}-z)u\|_{L^{a}(M)}
\end{align*}
for the couple of dual exponents $(a=2n/(n+2),b=2n/(n-2))$.
Note that by taking $u=1$, it is clear that such estimates can only hold away from $z=0$, and similarly by taking $u$ to be any eigenfunction of 
 $-\Delta_g$, away from the negative of any eigenvalue. This is in contrast to the non-compact flat case where Kenig, Ruiz and Sogge
 \cite{KRS} were able to obtain such estimates for all values of the complex parameter.  In fact, we will restrict our investigation to the 
 following set of values
 \begin{align*}
     \Xi_\delta &= \big\{z \in \C \setminus \R_- : \re \sqrt{z} \geq \delta  \big\} \\
     &=\big\{z \in \C \setminus \R_- : (\im z)^2 \geq 4\delta^2(\delta^2-\re z) \big\}.
\end{align*}
This set is the exterior of a parabola and can be visualized in figure \ref{fig:AdmSpec}.
The corresponding results were obtained by Shen \cite{Shen} on the flat torus. 

It seems that such estimates cannot be obtained as a direct consequence of the spectral cluster estimates obtained by Sogge \cite{Sog}. 
Indeed, if $\chi_m$ denotes the cluster of spectral projectors related to the eigenvalues of the Laplacian the square root of the negative 
of which lie in the interval $[m,m+1)$ then Sogge's estimates read
\begin{align*}
     \|\chi_m u\|_{L^{\frac{2n}{n-2}}(M)} &\leq C (1+m)^{\frac{1}{2}} \|u\|_{L^2(M)} \\
     \|\chi_m u\|_{L^{2}(M)} &\leq C (1+m)^{\frac{1}{2}} \|u\|_{L^{\frac{2n}{n+2}}(M)}. 
\end{align*}     
This would reduce the resolvent estimate to an $L^2$ estimate of the form
     $$ \|\chi_m u\|_{L^2(M)} \leq A(m,z) \|\chi_m (\Delta_g-z)u\|_{L^2(M)} $$ 
but summing the corresponding series $\sum_{m=0}^{\infty} (1+m)A(m,z)$ to patch the estimates requires a decay
of $A(m,z)$ which cannot be achieved. Nevertheless, we will use Sogge's estimates in the process of reducing 
Carleman estimates with limiting Carleman weights to resolvent estimates in the last section of this paper.
\begin{center}
\begin{figure}[h]
    \includegraphics[scale=0.5]{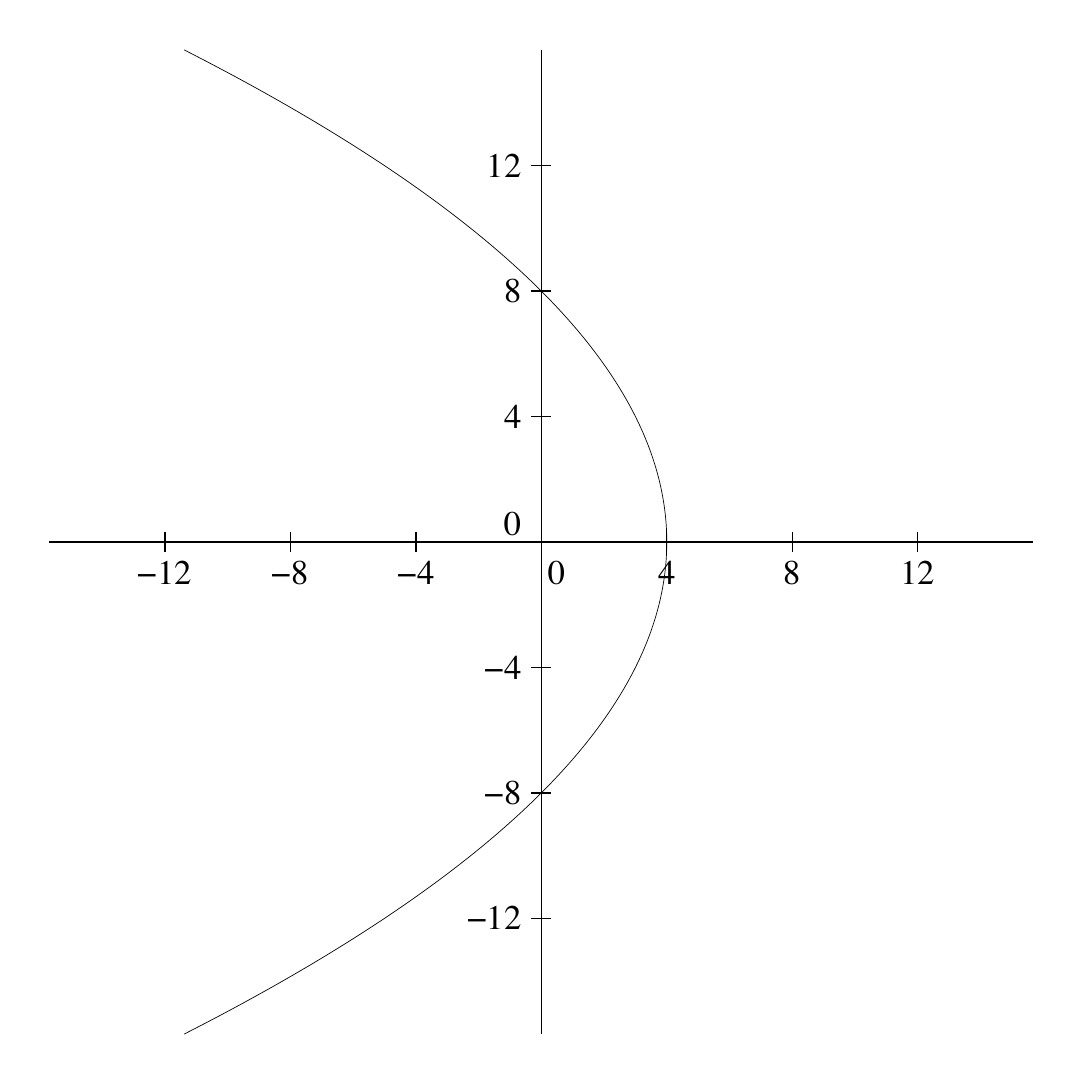}
     \caption{Allowed values of the spectral parameter $z$}
     \label{fig:AdmSpec}
\end{figure}
\end{center}
\begin{thm}
\label{Intro:MainThm}
     Let $(M,g)$ be a compact Riemannian manifold (without boundary) of dimension $n \geq 3$, and let $\delta \in (0,1)$ be a positive number.
     There exists a constant $C>0$ such that for all $u \in \mathcal{C}^{\infty}(M)$ and all $z\in \Xi_{\delta}$, the following resolvent estimate holds
     \begin{align} 
          \|u\|_{L^{\frac{2n}{n-2}}(M)} \leq C  \|(\Delta_{g}-z)u\|_{L^{\frac{2n}{n+2}}(M)}.
     \end{align}
\end{thm}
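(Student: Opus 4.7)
The plan is to follow the parametrix approach indicated in the abstract. Writing $z = \tau^2$ with principal branch so that $\re \tau \geq \delta > 0$, the estimate becomes
\begin{equation*}
    \|u\|_{L^{2n/(n-2)}(M)} \leq C \|(\Delta_g - \tau^2)u\|_{L^{2n/(n+2)}(M)}
\end{equation*}
uniformly in such $\tau$. Bounded $|\tau| \leq \tau_0$ is dispatched by elliptic regularity: $z$ remains a distance $\delta^2$ away from the positive real axis (hence from the spectrum) thanks to $\re \tau \geq \delta$, so $R(z)$ is bounded $H^{-1} \to H^1$ uniformly, and the Sobolev embeddings $L^{2n/(n+2)} \hookrightarrow H^{-1}$ and $H^1 \hookrightarrow L^{2n/(n-2)}$ close the argument. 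The real work is the high-frequency regime $|\tau| \geq \tau_0$.

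For large $|\tau|$, I would construct a Hadamard parametrix $E_\tau$ in a finite cover of $M$ by coordinate patches of radius less than the injectivity radius. Following the classical construction, one seeks locally
\begin{equation*}
    E_\tau(x,y) = \sum_{k=0}^{N} \alpha_k(x,y)\, \Phi_k\bigl(\tau, d_g(x,y)\bigr),
\end{equation*}
where the $\alpha_k$ are obtained by solving transport equations along geodesics from $y$ (with $\alpha_0$ essentially the square root of the Van Vleck determinant), and the $\Phi_k$ involve Hankel functions of order $(n-2)/2 - k$, arranged so that $(-\Delta_g - \tau^2)E_\tau = I + R_N$ with $R_N$ a smoothing remainder of order $-N$. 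For $\tau\, d_g(x,y) \gtrsim 1$, the leading term of the parametrix has oscillatory kernel
\begin{equation*}
    a(x,y)\, \tau^{(n-3)/2}\, \e^{i\tau d_g(x,y)}\, d_g(x,y)^{-(n-1)/2},
\end{equation*}
whereas for $\tau\, d_g(x,y) \lesssim 1$ it reduces to the Newton-type kernel $\sim d_g(x,y)^{-(n-2)}$. A partition of unity assembles a global parametrix, and the difference with the true resolvent $R(z)$ is a smoothing operator whose mapping norm into any reasonable function space is $O(|\tau|^{-N})$.

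The $L^{2n/(n+2)} \to L^{2n/(n-2)}$ bound for $E_\tau$ then splits along the threshold $\tau\, d_g(x,y) \sim 1$. The near-diagonal part is handled by Hardy-Littlewood-Sobolev, since $(2n/(n+2), 2n/(n-2))$ is exactly the endpoint pair for the Riesz kernel of order $2$, and the bound is $\tau$-independent. The far part, oscillatory, is decomposed dyadically in $d_g$; on each annulus $\{d_g \sim 2^{-j}\}$ with $2^{-j} \gtrsim 1/|\tau|$, a rescaling puts the oscillatory integral in standard Carleson-Sj\"olin form. The phase $d_g(x,y)$ verifies the Carleson-Sj\"olin condition near the diagonal: the mixed Hessian has rank $n-1$ and the associated canonical relation has non-vanishing rotational curvature, both standard Riemannian facts following from the non-degeneracy of the exponential map. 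The main obstacle is that the contributions from different dyadic annuli must combine into a $\tau$-independent constant; the endpoint pair $(2n/(n+2), 2n/(n-2))$ is precisely the one for which the Carleson-Sj\"olin bound on each annulus scales correctly and sums as a geometric series. A companion delicate point is uniformity as $\im\tau \to 0^+$, where no exponential damping is available: here the Carleson-Sj\"olin oscillations must supply the required decay on their own, and this is exactly what the method achieves—making this step the pivot of the whole argument.
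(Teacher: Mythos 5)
Your architecture---Hadamard parametrix, threshold at $\tau\, d_g \sim 1$, Hardy--Littlewood--Sobolev near the diagonal, dyadic decomposition and Carleson--Sj\"olin away from it---is exactly the paper's, and you correctly identify the oscillatory regime as the heart of the matter. But there is a genuine gap in the treatment of the remainder, and it is where the hypothesis $z \in \Xi_\delta$ actually enters.

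You assert that after assembling the local parametrices, ``the difference with the true resolvent $R(z)$ is a smoothing operator whose mapping norm into any reasonable function space is $O(|\tau|^{-N})$.'' This is not true. On a compact manifold the Hadamard construction is inherently local: each local piece must be cut off by a $\chi(x,y)$ supported near the diagonal, and the identity one gets is $(-\Delta_g + z)T_{\rm Had}(z) = \id + S(z)$ where $S(z)$ has \emph{two} parts. One part, $\chi(\Delta_g\alpha_N)F_N$, is indeed harmless (it is $O(|z|^{-1/2})$ once $N > (n-1)/2$, though not $O(|z|^{-N})$). The other part is the commutator $[\chi,\Delta_g]$ applied to the parametrix kernel; it is supported in an annulus $d_g(x,y) \sim 1$ where the kernel oscillates like $e^{-\sqrt{z}\,d_g(x,y)}$ with size $\sim |z|^{(n-3)/4}$, and its operator norm \emph{grows} polynomially in $|z|$ (cf. the paper's Lemma \ref{LpEst:RemainderEstLem}). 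No choice of $N$ makes this piece small. Consequently $\id + S(z)$ is not invertible by a Neumann series, and the estimate does not follow from the parametrix bound alone.

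The missing ingredient is an $L^2$ resolvent bound obtained from the spectral decomposition of $-\Delta_g$: namely $\|u\|_{L^2} \leq |z|^{-1/2}(\re\sqrt{z})^{-1}\|(-\Delta_g + z)u\|_{L^2}$ (the paper's Lemma \ref{LpEst:L2ResEst}), which on $\Xi_\delta$ yields $\|u\|_{L^2} \lesssim |z|^{-1/2}\|(-\Delta_g + z)u\|_{L^2}$. Writing $u = T_{\rm Had}(z)(-\Delta_g+z)u + v$ with $(-\Delta_g+z)v = -S(z)(-\Delta_g+z)u$, one bounds $\|v\|_{L^{2n/(n-2)}}$ by combining the adjoint parametrix (used as a left parametrix), the $L^2$--$L^{2n/(n-2)}$ parametrix estimate with the extra $|z|^{-1/4}$ decay, and the improved $L^2$ bound just mentioned; the powers of $|z|$ then cancel exactly against the growth of $\|S(z)\|$. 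Note that this $L^2$ bound, not the Carleson--Sj\"olin analysis, is the step that fails outside the parabola $\re\sqrt{z} \geq \delta$ --- which is why the theorem is stated only on $\Xi_\delta$ and why extending it is posed as an open problem in the paper. Your diagnosis that ``the Carleson--Sj\"olin oscillations supply the decay on their own'' is accurate for the parametrix itself, but it does not explain why the admissible set of spectral parameters is a parabola, and it leaves the commutator remainder unabsorbed.

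Finally, a smaller remark: your reduction to $|\tau| \geq \tau_0$ by compactness is fine, and deducing the low-frequency case from $H^{-1}\to H^1$ boundedness of $R(z)$ plus Sobolev embedding is correct, since on $\Xi_\delta$ with $|z|$ bounded the parameter stays a fixed distance from $\sigma(-\Delta_g) \subset [0,\infty)$ (equivalently from $-\sigma(-\Delta_g)$ as poles of $R(z)$). The paper handles small and large $|z|$ uniformly through the same machinery, but your split is a perfectly legitimate alternative for that part.
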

The proof of the theorem relies on the construction of the Hadamard parametrix.
We provide a fairly complete study of the $L^p$ boundedness properties of Hadamard's parametrix and the corresponding remainder term.
Some of the estimates might not be used in the proof of  Theorem \ref{Intro:MainThm} 
but may be of independent interest in other contexts.

\begin{rem}
      The question of whether the estimates hold for a larger set of values of the spectral parameter $z$, for instance
           $$ \widetilde{\Xi}_{\delta} = \{z \in \C \setminus \R_- : |\im z| \geq \delta \text{ when }\re z<0  \text{ and } 
                |z|>\delta \text{ when }\re z \geq 0\big\} $$
      remains open.
\end{rem}

As is common practice, we will write $A \lesssim B$ as shorthand for $A \leq CB$ where $C$
is a constant which depends on known fixed parameters (the dimension $n$, the indices $p,q$ of the Lebesgue classes involved, the Riemannian
manifold $(M,g)$, etc.),  but whose value is not crucial to the problem at hand. Similarly we use the notation $A \simeq B$ when $A \lesssim B$ 
and $A\gtrsim B$. We consider the principal branch of the square root on $\C \setminus \R_-$ which we denote by $\sqrt{z}$. 
We also choose the convention that $\arg z \in (-\pi,\pi)$ when $z \in \C \setminus \R_-$. \\

In the second section of this note, we review the different bounds known on oscillatory integral operators, in particular those obtained by the Carleson-Sj\"olin
theory. In the third section, we construct the Hadamard parametrix following \cite{Hor}. In the fourth section, we prove bounds on the Hadamard 
parametrix following the techniques of \cite{KRS,Sog} and derive the corresponding resolvent estimates. 
Finally, in the last section, we illustrate the pertinence of resolvent 
estimates by showing how they imply Carleman estimates with limiting Carleman weights in the spirit of Jerison and Kenig's estimates \cite{JK}.
Note that these Carleman estimates can be obtained by other means \cite{DKSa}.

\subsection*{Acknowledgements} Carlos Kenig was partially supported by NSF grant DMS-0968472. Mikko Salo is supported in part by
the Academy of Finland. David Dos Santos Ferreira and Mikko Salo would like to acknowledge the hospitality of the University of Chicago.
\end{section}
%
%
\begin{section}{Oscillatory integral operators}
This section is a short review of the classical boundedness results of operators with oscillatory kernels which will be needed
in this note. We consider operators of the form
    $$ T_{\lambda} u(x) = \int \e^{i \lambda \phi(x,y)} a(x,y) u(y) \, \dd y $$
where $a$ belongs to $\D(U \times V)$ with $U,V$ open sets in $\R^{n}$ and $\phi$ is a smooth function on $U \times V$. 
The product $T^*U \times T^*V$ of cotangent bundles of $U$ and $V$ endowed with the symplectic form $\sigma = \dd \xi \wedge \dd x + \dd \eta \wedge \dd y$
is a symplectic manifold of dimension $4n$. A fundamental object in the study of the operators under our scope is the Lagrangian submanifold of $T^*U  \times T^*V$
\begin{align*}
     \mathcal{C}_{\phi} = \big\{(x,\d_{x}\phi(x,y),y,\d_{y}\phi(x,y)) : (x,y) \in U \times V \big\}.
\end{align*} 
The first result relates to $L^2$ boundedness of such oscillatory integral operators. For that purpose, we consider the two projections
\begin{align*}
 \xymatrix @!0 @C=4pc @R=3pc {
    & \ar[ld]_{\pi_U} \mathcal{C}_{\phi} \subset T^*U\times T^*V  \ar[rd]^{\pi_V} &  \\
    T^*U & & T^*V.
  }
\end{align*}
If $\pi_{U}$ is a local diffeomorphism, i.e.
\begin{align}
\label{Osc:NonDegHyp}
     \det \frac{\d^2 \phi}{\d x \d y} \neq 0 \quad \text{ on } U \times V 
\end{align}
then so is $\pi_{V}$ and $\mathcal{C}_{\phi}$ is locally the graph of the canonical transformation $\varsigma = \pi_{U}^{-1} \circ \pi_{V}$.
The mother of $L^2$ estimates \cite{Hor} \cite[Theorem 2.1.1]{Sog3} corresponds to the case where $\mathcal{C}_{\phi}$ is locally a canonical graph.        
\begin{thm}
\label{Osc:L2Thm}
     Suppose that the non-degeneracy assumption \eqref{Osc:NonDegHyp} is satisfied, then for all compact $K \subset V$
     there exists a constant $C>0$ such that for all $\lambda \geq 1$ and all $u \in \D(K)$ 
     \begin{align*}
          \|T_{\lambda}u\|_{L^{2}} \leq C \lambda^{-n/2} \|u\|_{L^{2}}.
     \end{align*}
\end{thm}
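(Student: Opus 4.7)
The plan is to prove the estimate by the $TT^*$ method combined with (non-)stationary phase through integration by parts, following the classical Hörmander strategy.

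First I would form the composition $T_\lambda T_\lambda^*$, whose kernel is
\begin{equation*}
   K_\lambda(x,x') = \int \e^{i\lambda(\phi(x,y)-\phi(x',y))}\, a(x,y)\overline{a(x',y)}\,\dd y.
\end{equation*}
Once a uniform bound of the form $\|T_\lambda T_\lambda^*\|_{L^2\to L^2} \lesssim \lambda^{-n}$ is established, taking square roots gives the desired $\lambda^{-n/2}$ bound on $T_\lambda$. To control this operator norm I would use Schur's test, so the goal becomes the kernel bound
\begin{equation*}
   \sup_{x}\int |K_\lambda(x,x')|\,\dd x' + \sup_{x'}\int |K_\lambda(x,x')|\,\dd x \;\lesssim\; \lambda^{-n}.
\end{equation*}

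To estimate $K_\lambda$, I would exploit the non-degeneracy assumption \eqref{Osc:NonDegHyp} to gain control on the gradient in $y$ of the phase $\psi(x,x',y) = \phi(x,y) - \phi(x',y)$. Writing
\begin{equation*}
   \d_y \psi(x,x',y) = \int_0^1 \d_x\d_y\phi\bigl(x'+t(x-x'),y\bigr)(x-x')\,\dd t,
\end{equation*}
the non-vanishing of $\det \d_x\d_y\phi$, after a partition of unity shrinking the support of $a$ so that $\d_x\d_y\phi$ stays close to an invertible constant matrix, yields a lower bound $|\d_y \psi| \gtrsim |x-x'|$ uniformly on the support of $a(x,y)\overline{a(x',y)}$. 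I would then introduce the first-order differential operator
\begin{equation*}
   L = \frac{1}{i\lambda |\d_y\psi|^2}\, \overline{\d_y\psi}\cdot \nabla_y,\qquad L\,\e^{i\lambda \psi} = \e^{i\lambda \psi},
\end{equation*}
and integrate by parts $N$ times using its transpose. Each integration produces a factor $(\lambda|x-x'|)^{-1}$, while the derivatives falling on $|\d_y\psi|^{-2}\, \overline{\d_y\psi}$ and on $a(x,y)\overline{a(x',y)}$ remain uniformly bounded on the support of $a$. This gives, for every $N$,
\begin{equation*}
   |K_\lambda(x,x')| \;\lesssim\; (1+\lambda|x-x'|)^{-N}.
\end{equation*}

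Finally, splitting the integration in $x'$ into the regions $|x-x'| \leq 1/\lambda$ (where the trivial bound $|K_\lambda|\lesssim 1$ and the volume factor $\lambda^{-n}$ suffice) and $|x-x'| \geq 1/\lambda$ (where one uses the decay above with $N>n$), I would obtain $\int |K_\lambda(x,x')|\,\dd x' \lesssim \lambda^{-n}$, and the symmetric bound in $x$ follows identically. Schur's lemma then delivers the desired operator bound. The main obstacle is the uniform lower bound on $|\d_y \psi|$: it relies on the non-degeneracy hypothesis together with a localization argument via a partition of unity on $U\times V$, which is why the constant $C$ depends on the compact set $K$ and on $\supp a$.
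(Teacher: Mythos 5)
Your proof is correct and is the standard $TT^*$/non-stationary-phase argument due to H\"ormander, which is exactly the argument the paper relies on by citation: the paper does not reprove Theorem \ref{Osc:L2Thm} but refers to \cite{Hor} and \cite[Theorem 2.1.1]{Sog3}. One small imprecision in your bookkeeping: the $y$-derivatives falling on $|\d_y\psi|^{-2}\,\overline{\d_y\psi}$ are \emph{not} uniformly bounded in $|x-x'|$; rather, because $\d_y\psi$ and all its $y$-derivatives are $O(|x-x'|)$ while $|\d_y\psi|\gtrsim|x-x'|$, each such derivative contributes an extra factor of order $|x-x'|^{-1}$, and the inductive count shows $(L^t)^N(a\bar a) = O\big((\lambda|x-x'|)^{-N}\big)$, which combined with the trivial bound gives $|K_\lambda(x,x')|\lesssim(1+\lambda|x-x'|)^{-N}$ as you state; so the conclusion stands, only the intermediate justification needs tightening.
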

This non-degeneracy assumption is too stringent for the applications in this note (we stated this result only for the sake of completeness),
in fact, we will use a weaker result and make the following non-degeneracy assumption \cite[Theorem 25.3.8]{Hor}  
\begin{align}
\label{Osc:Corank}
     \mathop{\rm corank} \sigma_{\mathcal{C}_{\phi}} = 2
\end{align}
on the lifted symplectic form
    $$ \sigma_{\mathcal{C}_{\phi}} = \pi_U^* \sigma. $$
Note that we could have used either the first or the second projection $\pi_{V}$ to lift the symplectic form; the condition is explicitly given by%
\footnote{This is also equivalent to the fact that $\dd\pi_{U} : T(\mathcal{C}_{\phi}) \to T(T^*U)$ has rank $2n-1$.}
\begin{align}
\label{Osc:CorankEquiv}
     \mathop{\rm corank} \frac{\d^2 \phi}{\d x \d y} = 1  \quad \text{ on } U \times V. 
\end{align}
The $L^2$ boundedness result that we will need reads as follows.
\begin{thm}
\label{Osc:DegL2Thm}
     Suppose that the non-degeneracy assumption \eqref{Osc:CorankEquiv} is satisfied, then for all compact $K \subset V$
     there exists a constant $C>0$ such that for all $\lambda \geq 1$ and all $u \in \D(K)$ 
     \begin{align*}
          \|T_{\lambda}u\|_{L^{2}} \leq C \lambda^{-(n-1)/2} \|u\|_{L^{2}}.
     \end{align*}
\end{thm}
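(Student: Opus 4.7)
The strategy is the $TT^*$ method combined with a normal form for the phase adapted to the corank one condition. By a smooth partition of unity it suffices to work near a fixed point $(x_0,y_0)\in U\times V$ in the support of $a$. The kernel (in the $x$-variable) of the $n\times n$ matrix $\d^2_{xy}\phi$ has constant dimension one by \eqref{Osc:CorankEquiv}, and therefore defines a smooth line subbundle of $TU$ on a neighborhood of $(x_0,y_0)$. Straightening this line field via a local change of coordinates in the $x$-variable, we may assume that it coincides with $\d/\d x_1$, so $\d^2_{yx_1}\phi\equiv 0$ on a neighborhood. Writing $x=(x_1,x')$ with $x'\in\R^{n-1}$, this means that $\d_y\phi$ is independent of $x_1$ and hence that
\[
  \phi(x,y)=\phi_0(x',y)+c(x)
\]
for some smooth $\phi_0$ and $c$. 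The submatrix $\d^2_{yx'}\phi_0$ is $n\times(n-1)$ of full rank $n-1$, so by compactness there is $\kappa>0$ with $|\d^2_{yx'}\phi_0(x',y)\cdot v|\geq\kappa|v|$ for all $v\in\R^{n-1}$ and all $(x',y)$ in a sufficiently small neighborhood.

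The kernel of $T_\lambda T_\lambda^*$ is
\[
  K_\lambda(x,w)=\int \e^{i\lambda\Psi(x,w,y)}\,a(x,y)\overline{a(w,y)}\,\dd y,\qquad \Psi(x,w,y)=\phi(x,y)-\phi(w,y).
\]
By the normal form, $\d_y\Psi=\d_y\phi_0(x',y)-\d_y\phi_0(w',y)$, and the mean value theorem together with the lower bound above yields $|\d_y\Psi|\geq \tfrac{\kappa}{2}|x'-w'|$ on the support of the amplitude. The trivial estimate $|K_\lambda(x,w)|\lesssim 1$ holds always, while in the regime $\lambda|x'-w'|\geq 1$ iterated integration by parts against the operator $L=(i\lambda|\d_y\Psi|^2)^{-1}\d_y\Psi\cdot\d_y$ (which satisfies $L\e^{i\lambda\Psi}=\e^{i\lambda\Psi}$) produces
\[
  |K_\lambda(x,w)|\leq C_N\,(\lambda|x'-w'|)^{-N}
\]
for any $N\in\N$.

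Integrating over $w$ on the compact support of $a(w,\cdot)$ and using that $w_1$ ranges over a bounded set,
\[
  \int |K_\lambda(x,w)|\,\dd w\lesssim \int_{\R^{n-1}}\min\bigl(1,(\lambda|x'-w'|)^{-N}\bigr)\,\dd w'\lesssim \lambda^{-(n-1)}
\]
uniformly in $x$ as soon as $N>n-1$, and the analogous bound with the roles of $x$ and $w$ swapped is symmetric. Schur's lemma then gives $\|T_\lambda T_\lambda^*\|_{L^2\to L^2}\lesssim \lambda^{-(n-1)}$, and extracting a square root produces the claimed bound $\|T_\lambda\|_{L^2\to L^2}\lesssim \lambda^{-(n-1)/2}$. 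The main obstacle is the first step, namely securing the normal form: one has to check that the line field cut out by the kernel of $\d^2_{xy}\phi$ depends smoothly on $(x,y)$ and that its straightening is compatible with the cutoffs at hand. Once the decomposition $\phi=\phi_0(x',y)+c(x)$ is in place, the rest is classical non-stationary phase.
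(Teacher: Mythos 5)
The paper does not prove Theorem~\ref{Osc:DegL2Thm}; it is quoted directly from H\"ormander \cite[Theorem 25.3.8]{Hor}, so there is no in-paper argument to compare against. Your $TT^*$ framework, the non-stationary-phase bound on the kernel, and the Schur test are all fine, but the reduction to the normal form $\phi(x,y)=\phi_0(x',y)+c(x)$ has a genuine gap. The left kernel of $\d^2_{xy}\phi(x,y)$ is a line in $T_xU\cong\R^n$ that in general depends on \emph{both} $x$ \emph{and} $y$, so it is not a line field on $U$ and cannot be straightened by a $y$-independent change of the $x$-coordinates. Concretely, take
\begin{align*}
   \phi(x,y)=\sum_{i=1}^{n-1}x_iy_i+\tfrac{1}{2}x_ny_1^2+\tfrac{1}{2}y_nx_2^2 \qquad (n\ge 3)
\end{align*}
near the origin: one checks $\mathop{\rm corank}\d^2_{xy}\phi\equiv 1$, the left kernel is spanned by $(-y_1,0,\dots,0,1)$ (depends on $y_1$), and the right kernel is spanned by $(0,-x_2,0,\dots,0,1)$ (depends on $x_2$); neither can be made constant by a coordinate change in one variable alone. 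The remark you append at the end of the proof --- that one ``has to check that the line field cut out by the kernel of $\d^2_{xy}\phi$ depends smoothly on $(x,y)$'' --- in fact pinpoints the problem: the dependence on $y$ is not a regularity technicality to verify, it is precisely what makes the claimed straightening impossible.

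Note also that simply weakening the normal form to an approximate one ($\d^2_{x_1y}\phi$ small rather than zero after a constant linear change of $x$-coordinates) does not repair the Schur-test computation: the resulting lower bound $|\d_y\Psi|\gtrsim\kappa|x'-w'|-C\eps|x_1-w_1|$ degenerates when $|x_1-w_1|\gg|x'-w'|$, and the region where it fails contributes $O(1)$ rather than $O(\lambda^{-(n-1)})$. The standard fix does not attempt a normal form at all. After localizing and making a \emph{constant} linear change of the $y$-coordinates so that the right kernel of $\d^2_{xy}\phi$ at the base point is $e_n$, continuity gives that the $n\times(n-1)$ block $\d^2_{xy'}\phi$ (with $y'=(y_1,\dots,y_{n-1})$) has rank $n-1$ on a neighborhood. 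One then slices: for each fixed $y_n$ the operator $T_\lambda^{y_n}\colon L^2(\R^{n-1}_{y'})\to L^2(\R^n_x)$ is a non-degenerate oscillatory integral operator and the $T_\lambda^*T_\lambda$ argument (integrating by parts in the $n$ variables $x$, with $|\d_x(\phi(x,z')-\phi(x,y'))|\gtrsim|z'-y'|$ by injectivity of $\d^2_{y'x}\phi$) gives $\|T_\lambda^{y_n}\|\lesssim\lambda^{-(n-1)/2}$ uniformly in $y_n$; Minkowski's inequality and Cauchy--Schwarz over the compact $y_n$-support then yield the stated bound for $T_\lambda$. This route requires only pointwise linear algebra and continuity, no integration of a line field.
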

By interpolation with the trivial $L^{\infty}-L^1$ bound, one gets the following $L^p-L^{p'}$ result.
\begin{clry}
\label{Osc:NonDegClry}
     Under the assumptions of Theorem \ref{Osc:DegL2Thm}, we have
     \begin{align*}
          \|T_{\lambda}u\|_{L^{p'}} \leq C_{p} \lambda^{-(n-1)/p'} \|u\|_{L^{p}}
     \end{align*}
     where $1 \leq p \leq 2$.
\end{clry}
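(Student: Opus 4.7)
The plan is to derive the corollary by complex interpolation between two endpoint estimates, namely the trivial $L^1 \to L^\infty$ bound and the $L^2 \to L^2$ bound supplied by Theorem \ref{Osc:DegL2Thm}.

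First I would verify the $L^1 \to L^\infty$ endpoint. Since $a \in \D(U \times V)$, the Schwartz kernel
$$ K_\lambda(x,y) = \e^{i \lambda \phi(x,y)} a(x,y) $$
of $T_\lambda$ satisfies $\|K_\lambda\|_{L^\infty(\R^n \times \R^n)} \leq \|a\|_{L^\infty} $ uniformly in $\lambda$. Consequently $\|T_\lambda u\|_{L^\infty} \leq \|a\|_{L^\infty} \|u\|_{L^1}$ for every $u \in \D(K)$, giving the endpoint
$$ \|T_\lambda\|_{L^1 \to L^\infty} \leq C_0 \lambda^0. $$
The other endpoint is precisely the content of Theorem \ref{Osc:DegL2Thm}, which gives
$$ \|T_\lambda\|_{L^2 \to L^2} \leq C_1 \lambda^{-(n-1)/2}. $$

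Next I would apply the Riesz--Thorin interpolation theorem to the analytic family (in fact, constant in the interpolation parameter) of operators $T_\lambda$. Picking $\theta \in [0,1]$ and setting
$$ \frac{1}{p} = (1-\theta) + \frac{\theta}{2}, \qquad \frac{1}{p'} = \frac{\theta}{2}, $$
one obtains $\theta = 2/p'$, and the interpolated norm bound reads
$$ \|T_\lambda\|_{L^p \to L^{p'}} \leq C_0^{1-\theta} C_1^\theta \lambda^{-\theta(n-1)/2} = C_p \lambda^{-(n-1)/p'}, $$
valid for $1 \leq p \leq 2$. This is precisely the claim of the corollary.

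There is essentially no obstacle here: the only verification required is the uniform boundedness of the kernel at the $L^1 \to L^\infty$ endpoint, which follows immediately from compact support of $a$, and then Riesz--Thorin supplies the rest. The substantive work is hidden inside Theorem \ref{Osc:DegL2Thm}, whose proof exploits the corank one degeneracy \eqref{Osc:CorankEquiv} of the mixed Hessian.
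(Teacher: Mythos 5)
Your proof is correct and is exactly the paper's argument: the paper states that Corollary \ref{Osc:NonDegClry} follows ``by interpolation with the trivial $L^\infty$--$L^1$ bound,'' which is precisely the Riesz--Thorin argument you carry out, and your exponent bookkeeping ($\theta = 2/p'$, yielding $\lambda^{-(n-1)/p'}$) is right.
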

The range of admissible exponents in this theorem is represented by the green line in figure \ref{fig:AdmExpBis}. 
    
The second type of results we need are those related to the $n \times n$ Carleson-Sj\"olin hypothesis \cite{CS}. 
Under this curvature assumption, the former $L^p-L^q$ boundedness results may be improved by a factor $\lambda^{-1/p}$.
We refer to \cite[Chapter IX]{Stein} and \cite[Chapter 2]{Sog3} for a more detailed exposition of such results.
Let us start by recalling this assumption:  suppose that \eqref{Osc:Corank}Ê\, holds, then 
\begin{align*}
      \Sigma_{x_{0}}^{U} &= \big\{\d_{x}\phi(x_{0},y) : y \in V \big\} \subset T_{x_{0}}^*U \\ \nonumber
      \Sigma_{y_{0}}^{V} &= \big\{\d_{y}\phi(x,y_{0}) : x  \in U \big\} \subset T_{y_{0}}^*V 
\end{align*}
are hypersurfaces in the respective cotangent spaces.
\begin{CSHyp}
         Both hypersurfaces $ \Sigma_{x_{0}}^{U}$ and $\Sigma_{y_{0}}^{V}$ have everywhere non-vanishing Gaussian curvature 
         for all $x_{0} \in U$ and $y_{0} \in V$.
\end{CSHyp}
Under the $n \times n$ Carleson-Sj\"olin hypothesis, we have the following boundedness result \cite[Corollary 2.2.3]{Sog3}.
\begin{thm}
\label{Osc:CSThm}
      Suppose that the non-degeneracy assumption \eqref{Osc:CorankEquiv} is satisfied and suppose that the $n \times n$ Carleson-Sj\"olin condition 
      holds. Then for any compact set $K \subset V$ there exists a constant $C>0$ such that 
      for all $\lambda \geq 1$ and all $u \in \D(K)$ 
     \begin{align}
          \|T_{\lambda}u\|_{L^q(\R^n)} \leq C \lambda^{-n/q} \|u\|_{L^p(\R^n)}
     \end{align}
     with $q = \frac{n+1}{n-1}p'$ and $1 \leq p \leq 2$.
\end{thm}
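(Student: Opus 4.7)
The strategy is to reduce the claim to the endpoint case $p=2$, $q_0 = \frac{2(n+1)}{n-1}$, and then to prove that endpoint by a $TT^*$ argument whose kernel estimate is driven by the Carleson-Sj\"olin curvature. The trivial bound $\|T_\lambda u\|_{L^\infty} \lesssim \|u\|_{L^1}$ holds uniformly in $\lambda$ because $a \in \D(U\times V)$, and Riesz-Thorin interpolation between it and the conjectural endpoint bound $\|T_\lambda u\|_{L^{q_0}} \lesssim \lambda^{-n/q_0} \|u\|_{L^2}$ yields the full statement for $p \in [1,2]$ with the relation $q = \frac{n+1}{n-1}p'$ and norm $\lambda^{-n/q}$; the content of the theorem is therefore the single endpoint estimate.

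For that endpoint, the $TT^*$ method reduces matters to
\begin{align*}
    \|T_\lambda T_\lambda^* f\|_{L^{q_0}} \lesssim \lambda^{-2n/q_0}\|f\|_{L^{q_0'}},
\end{align*}
and the integral kernel of $T_\lambda T_\lambda^*$ is
\begin{align*}
    K_\lambda(x,x') = \int \e^{i\lambda[\phi(x,y) - \phi(x',y)]} a(x,y)\overline{a(x',y)}\,\dd y.
\end{align*}
The technical core is a pointwise estimate of the form
\begin{align*}
    |K_\lambda(x,x')| \lesssim \lambda^{-(n-1)/2}\bigl(1+\lambda|x-x'|\bigr)^{-(n-1)/2},
\end{align*}
obtained by stationary phase in $y$. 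The non-degeneracy assumption \eqref{Osc:CorankEquiv} produces a Hessian of rank $n-1$ along the critical locus $\{y : \d_y\phi(x,y) = \d_y\phi(x',y)\}$, giving the prefactor $\lambda^{-(n-1)/2}$; the non-vanishing Gaussian curvature of $\Sigma_y^V$ supplied by the Carleson-Sj\"olin hypothesis then yields the additional oscillatory decay in $\lambda|x-x'|$. In practice one first reduces $\phi$ to a local normal form such as $\phi(x,y) = \langle x',y\rangle + x_n \psi(x',y)$, in which $\psi$ inherits the Hessian condition coming from the curvature assumption, and then performs iterated integration by parts.

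Converting this kernel bound to the operator bound is delicate because Young's inequality just fails at the threshold $r=q_0/2=\frac{n+1}{n-1}$: after the rescaling $y=\lambda x$ the integrability exponent $r(n-1)/2$ coincides with the dimension $n$, so $\|K_\lambda\|_{L^r}$ is not controlled. I would instead proceed either by complex interpolation of an analytic family $\{T_\lambda^{(s)}\}$ in the spirit of Stein's proof of the Tomas restriction theorem, or by a dyadic decomposition $K_\lambda = \sum_{j\geq 0} K_{\lambda,j}$ with $K_{\lambda,j}$ localized to $|x-x'| \sim 2^j/\lambda$, interpolating on each piece between the $L^2 \to L^2$ bound coming from Theorem \ref{Osc:DegL2Thm} and the $L^1 \to L^\infty$ bound furnished by the pointwise estimate; the resulting geometric series converges precisely because $q_0$ is the scale-critical exponent. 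The main obstacle is the kernel decay step itself: neither the corank 1 condition alone (which only reproduces Theorem \ref{Osc:DegL2Thm}) nor the curvature condition alone suffices, and combining them cleanly requires the normal-form reduction followed by a careful stationary-phase analysis that is genuinely the Stein-Tomas endpoint in disguise.
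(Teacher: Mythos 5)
The paper offers no proof of this statement; it is quoted verbatim from Sogge's book (Corollary 2.2.3 of \cite{Sog3}), where the $n\times n$ case is obtained from the genuine $n\times(n-1)$ Carleson--Sj\"olin theorem (Theorem 2.2.1 there) by foliating the $y$-domain into slices along the null direction of $\partial^2\phi/\partial x\partial y$, applying the $(n-1)$-dimensional estimate on each slice, and summing via Minkowski's integral inequality. Your proposal instead attempts a direct $TT^*$ argument on the $n\times n$ operator. The reduction to the endpoint $p=2$, $q_0 = 2(n+1)/(n-1)$ by Riesz--Thorin against the trivial $L^1\to L^\infty$ bound is correct and gives exactly the stated exponent line and the power $\lambda^{-n/q}$, so the reduction step is fine.

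The kernel estimate you propose, however, cannot be right. You claim
\begin{align*}
|K_\lambda(x,x')| \lesssim \lambda^{-(n-1)/2}\bigl(1+\lambda|x-x'|\bigr)^{-(n-1)/2},
\end{align*}
attributing a prefactor $\lambda^{-(n-1)/2}$ to the rank-$(n-1)$ Hessian and the remaining decay to the curvature. But at $x=x'$ the phase of $K_\lambda$ vanishes identically, so $K_\lambda(x,x)=\int |a(x,y)|^2\,dy$ is a fixed nonzero constant, not $O(\lambda^{-(n-1)/2})$. The source of the error is a double counting of the stationary phase gain: the phase is $\Phi(y)=\phi(x,y)-\phi(x',y)$ and its $y$-Hessian is $\partial_y^2\phi(x,y)-\partial_y^2\phi(x',y)=O(|x-x'|)$, so the transversal eigenvalues are of size $|x-x'|$, not of size $1$. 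Stationary phase in the $n-1$ transversal directions therefore produces a single factor $(\lambda|x-x'|)^{-(n-1)/2}$, and the one-dimensional tangential integral over the critical curve contributes $O(1)$. The correct estimate is
\begin{align*}
|K_\lambda(x,x')| \lesssim \bigl(1+\lambda|x-x'|\bigr)^{-(n-1)/2},
\end{align*}
without the extra prefactor.

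This correction makes the gap in the concluding step more serious than you diagnosed. With the correct kernel bound, Young's inequality at $r=q_0/2$ does converge (the borderline exponent in $\R^n$ is $r(n-1)/2 = (n+1)/2 < n$), but it only yields $\|T_\lambda T_\lambda^*\|_{L^{q_0'}\to L^{q_0}} \lesssim \lambda^{-(n-1)/2}$, far short of the required $\lambda^{-2n/q_0}=\lambda^{-n(n-1)/(n+1)}$. The Stein--Tomas argument you invoke (dyadic decomposition of the kernel in $|x-x'|\sim 2^j/\lambda$ followed by interpolation) needs, on each dyadic piece $A_j$, an $L^2\to L^2$ bound of order $2^j\lambda^{-n}$, and this does not follow from Schur's test applied to the kernel bound --- Schur gives $2^{j(n+1)/2}\lambda^{-n}$, and the resulting geometric series diverges. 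That $L^2$ input must come from the oscillation of the kernel, i.e.\ from Theorem \ref{Osc:DegL2Thm}, localized to dyadic annuli in $|x-x'|$, which is precisely where the real work lies. As written, your proposal neither supplies that estimate nor reduces to the $n\times(n-1)$ Carleson--Sj\"olin theorem where the corresponding step (Stein's analytic family $\{T^{(s)}_\lambda\}$) is carried out, so the argument does not close.
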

The range of admissible exponents in this theorem is represented by the red line in figure \ref{fig:AdmExpBis}.
\begin{center}
\begin{figure}
     \input{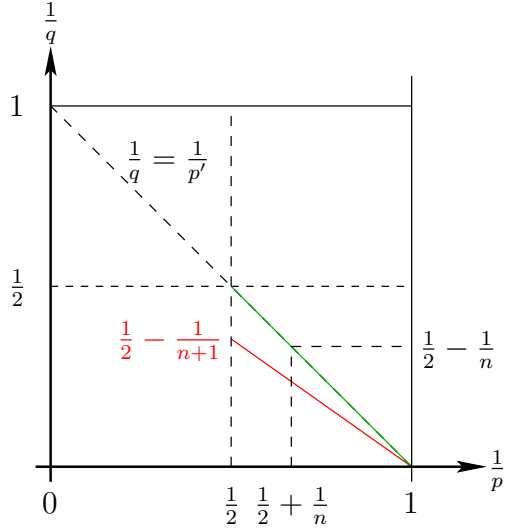}
     \caption{Admissible exponents in Theorem \ref{Osc:CSThm} and Corollary \ref{Osc:NonDegClry}}
     \label{fig:AdmExpBis}
\end{figure}
\end{center}
Let us finish by the following important remarks.
\begin{rem}
\label{Osc:Uniform}
     If the amplitude $a$ depends on parameters, the estimates in the theorems are uniform with respect to the parameters as long
     as the amplitude is supported in a fixed set $U \times V$ and is uniformly bounded  as well as all its derivatives. Similarly
     if the phase $\phi$ depends on parameters, the estimates remain uniform as long as all its derivatives are uniformly bounded and 
     either
     \begin{itemize}
           \item[--] there is a uniform lower bound on a minor of the determinant of the mixed Hessian in \eqref{Osc:CorankEquiv} in the case of Theorem 
           \ref{Osc:DegL2Thm} and Corollary \ref{Osc:NonDegClry},
           \item[--] or there is a uniform lower bound on the Gaussian curvature of the hypersurfaces in the case of Theorem \ref{Osc:CSThm}.
     \end{itemize}
\end{rem}
\begin{rem}
\label{Osc:RemSupport} 
      Although Theorems \ref{Osc:NonDegClry} and \ref{Osc:CSThm} are stated for oscillatory integral operators 
      $T_{\lambda}$ with amplitudes $a \in \D(U \times V)$ with constants that depend on the compact set $K = \supp a$, they are still valid 
      with constant depending on $R,\Lambda$ where we assume that
          $$ \big\{x-y : (x,y) \in \supp a \big\} \subset B(0,R) $$
      that $a$ and all its derivatives are bounded on $U \times V$ with bounds only depending on $\Lambda$
      and with phase $\phi$ satisfying non-degeneracy or curvature assumptions of the type discussed in Remark \ref{Osc:Uniform}, 
      uniformly on $U \times V$, with constant depending only on $1/\Lambda$.
      
      By the compact support assumption on the amplitude $a$ in $U\times V$, we have a natural extension of $T_{\lambda}$ to an operator
      acting on functions in $\R^n$, which gives a function on $\R^n$. We proceed as in \cite[pages 392--393]{Stein}.     
      Firstly, one observes that it suffices to prove an estimate of the form
      \begin{align}
      \label{Osc:LocEst}
           \|T_{\lambda}u\|_{L^q(B(x_0,1))} \leq C \lambda^{-s} \|u\|_{L^p(B(x_0,r))} 
      \end{align}
       with a constant $C$ uniform with respect to $x_0 \in \R^n$  and some radius $r$ depending only on $R$.
       In fact, if \eqref{Osc:LocEst} holds, raising both sides of \eqref{Osc:LocEst} to the $q$-th power $q$ and integrating with respect to $x_0 \in \R^n$ gives
       \begin{align*}
           |B(0,1)| \, \|T_{\lambda}u\|^q_{L^q}  \leq C^q \lambda^{-sq} \int \bigg( \int_{|x-x_0| <r} |u(x)|^p \, \dd x\bigg)^{\frac{q}{p}} \, \dd x_0
       \end{align*}
        and by Minkowski integral inequality ($q \geq p$)
        \begin{align*}
            |B(0,1)| \, \|T_{\lambda}u\|^q_{L^q}  \leq C^q \lambda^{-sq} \bigg(\int |u(x)|^p |B(0,r)|^{\frac{p}{q}} \, \dd x\bigg)^{\frac{q}{p}}
        \end{align*}
         so that
        \begin{align*}
             \|T_{\lambda}u\|_{L^q} \leq C \bigg(\frac{|B(0,r)|}{|B(0,1)|}\bigg)^{\frac{1}{q}} \lambda^{-s} \|u\|_{L^p}.
        \end{align*}
        
         Secondly, the estimate \eqref{Osc:LocEst} follow from the $L^p-L^q$  Theorems \ref{Osc:NonDegClry} and \ref{Osc:CSThm} 
         discussed in Remark \ref{Osc:Uniform} because the operator $T_{\lambda}$ does not move too much the support: let $\chi \in \D(B(0,2))$, 
         resp. $\psi \in \D(B(0,2r)$ be cutoff functions which equal one on $B(0,1)$, resp. $B(0,r)$. Then one has $\chi(x-x_0)a(x,y)=\chi(x-x_0)a(x,y)\psi(y-y_0)$
         for $r$ large depending on $R$ and 
        \begin{align*}
              \|T_{\lambda}u\|_{L^q(B(x_0,1))} \leq  \|\chi(\cdot -x_0) T_{\lambda}u\|_{L^q} = \|\chi(\cdot-x_0) T_{\lambda} \psi(\cdot-x_0)u\|_{L^q}.
        \end{align*} 
        Finally, the estimate \eqref{Osc:LocEst} follows from the $L^p-L^q$ boundedness results on the oscillatory integral operator
        \begin{align*}
             \int \chi(x) a(x+x_0,y+y_0) \psi(y) \e^{i \lambda \phi(x+x_0,y+y_0)} u(y) \, \dd y
        \end{align*} 
        which are uniform in $x_0$ by our assumptions on $a,\phi$.
\end{rem}
\end{section}
%
%
\begin{section}{Hadamard's parametrix}
\label{Sec:Had}

We follow \cite{Sog}, and use Hadamard's parametrix \cite[Section 17.4]{Hor} to prove the resolvent estimates. We first introduce the 
following sequence of functions
\begin{align*}
     F_{\nu}(|x|,z) =  \nu !  \, (2\pi)^{-n} \int_{\R^n} \frac{\e^{i x \cdot \xi}} {(|\xi|^2+z)^{1+\nu}} \, \dd \xi, \quad \nu \in \N
\end{align*}
including a fundamental solution $F_0$ of $(-\Delta+z)$ where $\Delta$  is the flat Laplacian on $\R^n$.
We used $|\cdot |$ for the Euclidean norm (recall that $|\cdot|_g$ denotes the norm induced by the Riemannian metric). 
These are radial functions and in polar coordinates, the equation on $F_{\nu}$ reads
\begin{align*}
     (-\Delta+z)F_{\nu} = -\d_{r}^2 F_{\nu} - \frac{n-1}{r} \d_{r}F_{\nu} +z F_{\nu} =
     \begin{cases}
          \delta_0 & \text{ when } \nu=0 \\ \nu F_{\nu-1} & \text{ when } \nu >0
     \end{cases}.
\end{align*}
Besides, as can be seen from their Fourier transform, these functions satisfy
\begin{align}
\label{Had:Recur}
     \d_rF_{\nu} = -\frac{r}{2} F_{\nu-1}, \quad \nu >0.
\end{align}
  
\subsection{Construction of the parametrix}

Let $x_{0} \in M$ and $U$ be a neighbourhood of $x_{0}$ in $M$. Since the construction is local, we can think of $U$ as a ball in $\R^n$, 
and work in coordinates. If $U$ is small enough, the distance function $r=d_g(x,y)$ is smooth on 
the product $U \times U$ with the diagonal removed, and furthermore one can take polar normal coordinates for $x$ in $U$ with center at $y$.
In polar coordinates 
   $$ x = \exp_{y}(r \theta), \quad \theta \in T_{y}M $$
the volume form takes the form
   $$ \dd V_g = r^{n-1} J(r,\theta) \, \dd r \wedge \dd \theta $$
where $\dd \theta$ denotes the canonical measure on the unit sphere $S_{y}M$ of the tangent space $(T_{y}M,g(y))$.
The Laplacian of the distance function $r=d_g(x,y)$ is given by%
\footnote{We refer the reader to \cite[section 4.B]{GHL} for those computations.}
   $$ \Delta_g r = \frac{n-1}{r} + \frac{\d_{r}J(r,\theta)}{J(r,\theta)} $$
and this implies for a radial function
\begin{align*}
     \Delta_g f(r) &= f''(r)|\dd r|_g^2 + f'(r) \Delta_g r  \\ &= f''(r) + \frac{n-1}{r}f'(r) + \frac{\d_{r}J}{J} f'(r). 
\end{align*}
This identity holds in the classical sense when $x \neq y$ and in the distribution sense on $U$.
If we use those computations on the function $\alpha_{0}F_{0}$, we get 
\begin{align*}
      (-\Delta_g+z)(\alpha_{0} F_{0}) = \alpha_0 \delta_0 - 2\bigg(\d_r\alpha_{0} +\frac{\d_r J}{2J} \alpha_{0} \bigg)\d_r F_{0}-(\Delta_g \alpha_{0})F_{0}
\end{align*}
 and on the function $\alpha_{\nu}F_{\nu}, \, \nu>0$ using \eqref{Had:Recur}
\begin{align*}
      (-\Delta_g+z)(\alpha_{\nu} F_{\nu}) = \bigg(r\d_r\alpha_{\nu} + \bigg(\nu+r \frac{\d_r J}{2J}\bigg)\alpha_{\nu}\bigg) F_{\nu-1}
      -(\Delta_g \alpha_{\nu})F_{\nu}
\end{align*}     
in $\mathcal{D}'(U)$. Thus, if we set     
     $$ F(x,y,z) = \sum_{\nu=0}^N \alpha_{\nu}(x,y) F_{\nu}\big(d_g(x,y),z\big), $$ 
then we have
\begin{multline*}
     (-\Delta_g+z) F = \alpha_0 \delta(r)   - 2\bigg( \d_r \alpha_0 + \frac{\d_{r}J}{2J} \alpha_{0}\bigg) \d_rF_0 \\
     +\sum_{\nu=1}^{N}\bigg(r\d_r \alpha_{\nu}+\bigg(\nu+r\frac{\d_rJ}{2J}\bigg)\alpha_{\nu}-\Delta_g \alpha_{\nu-1}\bigg)F_{\nu-1}
     -(\Delta_g \alpha_N) F_N
\end{multline*}
in $\mathcal{D}'(U)$, where the derivatives were taken with respect to $x$. Here $\delta(r)$ stands for the pullback of the Dirac mass $\delta_0$
on $\R$ by the map $x \mapsto d_g(x,y)$, this can be explicitly computed 
\begin{align*}
    \delta(r) =\frac{1}{\sqrt{\det g(y)}} \, \delta_{y}(x).
\end{align*}

If we choose the coefficients $\alpha_{\nu}$ to be solutions of the equations
\begin{align}
      \d_r \alpha_0 + \frac{\d_{r}J}{2J} \alpha_{0} &= 0 \label{Had:TranspZero}\\
      r\d_r \alpha_{\nu}+\bigg(\nu+r\frac{\d_rJ}{2J}\bigg)\alpha_{\nu}&=\Delta_g \alpha_{\nu-1}, \quad 1 \leq \nu \leq N \label{Had:TranspNu}
\end{align}
then the function $F$ satisfies:
\begin{align}
\label{Had:deltaEq}
     (-\Delta_g+z) F = \alpha_0 \delta(r)  -(\Delta_g \alpha_N) F_N.
\end{align}
The equation \eqref{Had:TranspZero} can be explicitly solved by taking
\begin{align*}
     \alpha_0 = \det g(y)^{-1/4} \frac{1}{\sqrt{J(r,\theta)}} = \bigg(\frac{\det g(y)}{\det (\exp_y^*g)(\exp_y^{-1}x)}\bigg)^{-\frac{1}{4}}.
\end{align*}
Note that the computations that we just made are not coordinate invariant, neither is the quantity $\det g$ (contrary to the volume form) 
so the choice of $\alpha_0$ may be understood in the following manner: first pick normal coordinates in $x$ depending on $y$ 
     $$ (x,y) \to (\exp^{-1}_y(x),y) $$
then $\alpha_0$ depends on the ratio of the determinant of the metric expressed in the coordinates in $y$ and of the determinant of the metric 
expressed in the normal coordinates (depending on $y$!) in $x$ that we have just picked.  Having given those precisions, it is clear that the function 
$\alpha_0$ is smooth and satisfies $\alpha_0(y,y)=1$.

Setting $\widetilde{\alpha}_{\nu}=\alpha_0^{-1} \alpha_{\nu}$ and $\widetilde{\beta}_{\nu-1} =\alpha_0^{-1} \Delta_g \alpha_{\nu-1}$ 
reduces the equation \eqref{Had:TranspNu} to
\begin{align*}
      \underbrace{r\d_r \widetilde{\alpha}_{\nu}+\nu\widetilde{\alpha}_{\nu}}_{=r^{1-\nu}\d_r(r^{\nu}\widetilde{\alpha}_{\nu})}
      &= \widetilde{\beta}_{\nu-1}, \quad 1 \leq \nu \leq N.
\end{align*}
In polar coordinates, this is explicitly solved by
\begin{align*}
     \widetilde{\alpha}_{\nu} = r^{-\nu} \int_0^r t^{\nu-1} \widetilde{\beta}_{\nu-1} (\exp_y t \theta) \, \dd t = 
     \int_0^1 t^{\nu-1} \widetilde{\beta}_{\nu-1} (\exp_y rt \theta) \, \dd t, \quad \nu \geq 1.
\end{align*}
By induction, one can solve the equations on $\alpha_{\nu}$ by taking
\begin{align*}
     \alpha_{\nu} = \alpha_0 \int_0^1 t^{\nu-1} \big(\alpha_0^{-1}\Delta_g \alpha_{\nu-1}\big) (\gamma_{x,y}(t)) \, \dd t.
\end{align*}
where $\gamma_{x,y}$ is the distance minimizing geodesic from $y$ to $x$ (this is well defined if the neighbourhood $U$ of $x_0$ is small enough). 
These functions are smooth on $U$.

Let $\chi \in \D(U \times U)$ be a symmetric function which equals one near the diagonal, we get using our computation \eqref{Had:deltaEq}
\begin{multline}
     (-\Delta_g+z) \Big[\chi(\cdot \,,y) F(\cdot \,,y,z)\Big] = \frac{\chi (y,y)}{\sqrt{\det g(y)}} \, \delta_{y}  \\ +  
     [\chi(\cdot,y),\Delta_g] F(\cdot \,,y,z) + \underbrace{\chi(\cdot,y)(\Delta_g \alpha_N)F_N\big(d_g(\cdot \,,y),z\big)}_{=H_N(x,y,z)}.
\end{multline}
Note that $[\chi(\cdot,y),\Delta_g]=2\grad_g \chi(\cdot,y)$ is a first order differential operator whose coefficients 
are supported in the support of $\chi$ and away from the diagonal $x=y$. We are ready to consider Hadamard's parametrix
\begin{align*}
     T_{\rm Had}(z)u = \int_{M} \chi(x,y) F(x,y,z) u(y)  \, \dd V_g(y)
\end{align*}
which satisfies
\begin{align}
\label{Had:ParmaIdentity}
    (-\Delta_g+z) T_{\rm Had}(z) u = \chi(x,x) u + S(z)u
\end{align}
with $S(z)= 2\grad_g \chi(\cdot,y) \circ T_{\rm Had}(z)+S_2(z)=S_1(z)+S_2(z)$ and
\begin{align*}
     S_2(z) = \int_{M} H_N\big(x,y,z) u(y)  \, \dd V_g(y).
\end{align*}
The integer $N$ will be chosen, in the next paragraph, large enough so that $S_2(z)$ is smoothing.
Given the fact that the function 
      $$ F(y,x,z) = \sum_{\nu=0}^N \alpha_{\nu}(y,x) F_{\nu}\big(d_g(x,y)\big) $$
behaves in a similar fashion to $F(x,y,z)$ (the only lack of symmetry comes from the coefficients $\alpha_{\nu}$ whose only relevant property
is their smoothness), the transpose ${}^tT_{\rm Had}(z)$ shares the same boundedness properties than 
$T_{\rm Had}(z)$. It is therefore not misleading to think of the Hadamard parametrix as a symmetric operator in terms
of its boundedness properties.
     
\subsection{Bessel functions}   
   
In this paragraph we describe the behaviour of the functions $F_{\nu}$ (see \cite[page 338--339]{KRS}, \cite[Lemma 4.3]{Sog}).
They can be explicitly computed in terms of Bessel functions
\begin{align}
\label{Had:Bessel}
     F_{\nu}(r,z) &= c_{\nu} r^{-\frac{n}{2}+\nu+1} z^{\frac{n}{4}-\frac{\nu+1}{2}} K_{n/2-1-\nu}\big(\sqrt{z}r\big) 
\end{align}
where $ K_{n/2-1-\nu}$ are Bessel potentials
\begin{align*}
       K_{m}(w) = \int_0^{\infty} \e^{-w \cosh t} \cosh(m t) \, \dd t, \quad \re w >0. 
\end{align*}
The qualitative properties of $K_m$ are as follows
\begin{align*}
     |K_m(w)| &\leq C_m |w|^{-m} \quad &&\text{when }  |w| \leq 1 \text{ and } \re w>0, \\
     K_m(w) &= a_m(w) w^{-\frac{1}{2}} \e^{-w} &&\text{when } |w| \geq 1 \text{ and }\re w>0, 
\end{align*}
where the functions $a_{m}$ have uniform estimates of the form
\begin{align}
      \bigg|\frac{\d^\alpha}{\d r^\alpha}a_{m}\Big(r\frac{w}{|w|}\Big)\bigg| \leq C_{\alpha, m} r^{-\alpha}, \quad  \alpha \in \N
\end{align}
when $r \geq 1$. We sum up the well known properties of the function $F_{\nu}$ obtained from the explicit formula \eqref{Had:Bessel} in the following lemma.
\begin{lemma}
\label{Had:BesselDescr}
     There is a constant $C>0$ such that
     \begin{align*}
          |F_{\nu}(r,z)| \leq C r^{-n+2+2\nu} \quad \text{ when } r \leq |z|^{-\frac{1}{2}}.
     \end{align*}
     Furthermore, one has
     \begin{align*}
          F_{\nu}(r,z) &= |z|^{\frac{n-1}{4}-\frac{\nu+1}{2}} \e^{-\sqrt{z} r} r^{-\frac{n-1}{2}+\nu} a_{\nu}(r,z)
     \end{align*}
     where the functions $a_{\nu}$ have a symbolic behaviour
     \begin{align*}
           \bigg|\frac{\d^\alpha a_{\nu}}{\d r^\alpha}(r,z)\bigg| \leq C_{\alpha,\nu} r^{-\alpha}, \quad \alpha \in \N
     \end{align*}
     when $r \geq |z|^{-\frac{1}{2}}$. The constants are all uniform with respect to $z \in \C$, $|z| \geq c$.
\end{lemma}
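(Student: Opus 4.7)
The plan is to derive both estimates by direct substitution into the explicit Bessel representation \eqref{Had:Bessel} and the pointwise bounds on $K_m$ recorded in the excerpt. Throughout, the principal branch convention gives $\re \sqrt{z} > 0$ for $z \in \C \setminus \R_-$, which validates the use of the $K_m$ formulas. Since $|\sqrt{z}\,r| = |z|^{1/2} r$, the two regimes $r \leq |z|^{-1/2}$ and $r \geq |z|^{-1/2}$ correspond respectively to $|\sqrt{z}\,r| \leq 1$ and $|\sqrt{z}\,r| \geq 1$, so each of the two bounds on $K_m$ applies on exactly one of the two regimes.

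For the first claim, in the regime $r \leq |z|^{-1/2}$ I would apply the near-zero bound $|K_m(w)| \leq C_m |w|^{-m}$ with $m = n/2 - 1 - \nu$ and $w = \sqrt{z}\,r$. Substituted into \eqref{Had:Bessel} this gives
\begin{equation*}
|F_{\nu}(r,z)| \leq C_\nu \, r^{-\frac{n}{2}+\nu+1} \, |z|^{\frac{n}{4}-\frac{\nu+1}{2}} \cdot |z|^{-\frac{1}{2}(\frac{n}{2}-1-\nu)} \, r^{-(\frac{n}{2}-1-\nu)} = C_\nu \, r^{-n+2+2\nu},
\end{equation*}
where the $|z|$-powers cancel exactly. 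The only delicate point is that the recorded bound on $K_m$ is stated for $m > 0$; when $m \leq 0$ the same exponent bookkeeping applies after invoking $K_m = K_{-m}$ (with a logarithm in the borderline $m=0$ case), and one checks that the resulting estimate is never worse than the stated one.

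For the second claim, in the regime $r \geq |z|^{-1/2}$ I would substitute the representation $K_m(w) = a_m(w) w^{-1/2} \e^{-w}$ into \eqref{Had:Bessel} with $m = n/2-1-\nu$. Combining powers of $r$ produces $r^{-(n-1)/2 + \nu}$, while the complex powers of $z$ combine to $z^{(n-1)/4 - (\nu+1)/2}$, which differs from $|z|^{(n-1)/4 - (\nu+1)/2}$ only by a bounded phase factor depending on $\arg z$. Absorbing this phase, the constant $c_\nu$, and the factor $(\sqrt{z}/|\sqrt{z}|)^{-1/2}$ into a redefined amplitude $a_{\nu}(r,z)$ yields the stated form.

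The symbolic estimates on $a_{\nu}$ follow from those on $a_m$: setting $\rho = |z|^{1/2} r \geq 1$, the chain rule $\partial_r = |z|^{1/2} \partial_\rho$ gives
\begin{equation*}
\bigl|\partial_r^\alpha a_m(\sqrt{z}\,r)\bigr| = |z|^{\alpha/2} \bigl|\partial_\rho^\alpha a_m(\rho \sqrt{z}/|\sqrt{z}|)\bigr| \leq |z|^{\alpha/2} \, C \, \rho^{-\alpha} = C \, r^{-\alpha},
\end{equation*}
and the $r$-independent phase contributes nothing to $r$-derivatives. All constants are uniform in $z$ for $|z| \geq c$, as the estimates on $K_m$ are. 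The only real obstacle is this last bookkeeping of the index $m$ when it is non-positive, but since the proof is essentially substitution and power-counting, nothing more subtle is required.
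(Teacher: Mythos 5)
Your approach is the intended one: the paper gives no proof of this lemma, merely asserting that the properties are well known consequences of the explicit Bessel representation \eqref{Had:Bessel}, and the direct substitution and power-counting you carry out is precisely that argument. The power bookkeeping for $m = n/2-1-\nu$ checks out in both regimes, and the treatment of the symbolic estimates via the rescaled variable $\rho = |z|^{1/2}r$ is correct.

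There is, however, one claim that does not survive scrutiny: you write that when $m \leq 0$ ``one checks that the resulting estimate is never worse than the stated one.'' For $m < 0$ this is right --- using $K_m = K_{-m}$ and $|K_{-m}(w)| \lesssim |w|^{m}$ for $|w| \leq 1$, the $r$-powers cancel and the surviving $|z|$-power is $n/2-\nu-1 < 0$, so the constraint $|z| \geq r^{-2}$ recovers $r^{-n+2+2\nu}$. But in the borderline case $m=0$ (which occurs exactly when $n$ is even and $\nu = n/2-1$), the near-zero behaviour $K_0(w) \sim -\log|w|$ gives $|F_{n/2-1}(r,z)| \lesssim \log(1/(|z|^{1/2}r))$, whereas the stated right-hand side is $r^{-n+2+2\nu} = r^0$, a constant. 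The logarithm is unbounded as $|z|^{1/2}r \to 0$, so the estimate \emph{is} strictly worse than stated, not merely at parity with it. This is really an imprecision in the lemma itself rather than a flaw in your method, and it is harmless in the sequel: in Lemma \ref{Had:FDescr} the dominant contribution is the $\nu=0$ term with $r^{-n+2}$, and $\log(1/(|z|^{1/2}r))$ is dominated by $r^{-n+2}$ for $n\geq 3$ and $|z|\geq c$. Still, the sentence as you wrote it is false, and it would be cleaner either to record the logarithmic exception explicitly or to note that it only arises in even dimensions and is absorbed by the $\nu=0$ term.
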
 
In particular, if $N>(n-1)/2$ then by  Lemma \ref{Had:BesselDescr} we have 
\begin{align}
\label{Had:RemGood}
      |F_N(r,z)| \lesssim |z|^{-1/2}
\end{align}
when $|z| \geq \delta$. We also recall that 
      $$ F(x,y,z) = \sum_{\nu=0}^N \alpha_{\nu}(x,y) F_{\nu}\big(d_g(x,y),z\big) $$
with smooth coefficients $\alpha_{\nu}$. Thus from Lemma \ref{Had:BesselDescr} we deduce bounds and asymptotics 
on the function $F$, which we synthesize in the following lemma.
\begin{lemma}
\label{Had:FDescr}
     There is a constant $C>0$ such that
     \begin{align}
       \label{Had:Asymp1}
          |F(x,y,z)| \leq C d_g(x,y)^{-n+2} \quad \text{ when } d_g(x,y) \leq |z|^{-\frac{1}{2}}.
     \end{align}
     Furthermore, one has
     \begin{align}
      \label{Had:Asymp2}
          F(x,y,z) &= |z|^{\frac{n-1}{4}-\frac{1}{2}} \e^{-\sqrt{z} \, d_g(x,y)} d_g(x,y)^{-\frac{n-1}{2}} a(x,y,z)
     \end{align}
     where the function $a$ has a symbolic behaviour
     \begin{align*}
           \big|\d^\alpha_{x,y} a(x,y,z)\big| \leq C_{\alpha} d_g(x,y)^{-\alpha}, \quad \alpha \in \N
     \end{align*}
     when $d_g(x,y)\geq |z|^{-\frac{1}{2}}$. The constants are all uniform with respect to $z \in \C$, $|z| \geq c$.
\end{lemma}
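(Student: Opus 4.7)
The plan is to deduce both estimates in the lemma directly from the corresponding statements in Lemma~\ref{Had:BesselDescr} by treating the finite sum $F(x,y,z) = \sum_{\nu=0}^{N} \alpha_{\nu}(x,y) F_{\nu}(d_g(x,y),z)$ term by term, exploiting smoothness of the coefficients $\alpha_\nu$ together with the fact that $r = d_g(x,y)$ is bounded above by the diameter of $M$ and $|z|$ is bounded below by $c$.

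For the first estimate \eqref{Had:Asymp1}, in the regime $r \leq |z|^{-1/2}$, I apply the short-range bound $|F_\nu(r,z)| \leq C r^{-n+2+2\nu}$. The $\nu = 0$ term contributes the dominant singularity $r^{-n+2}$; for $\nu \geq 1$ I write $r^{-n+2+2\nu} = r^{-n+2} \cdot r^{2\nu}$, and since $r \leq |z|^{-1/2} \leq c^{-1/2}$ the factor $r^{2\nu}$ is uniformly bounded. The smoothness of $\alpha_\nu$ on the compact set $U\times U$ then yields $|F(x,y,z)| \lesssim r^{-n+2}$.

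For the asymptotic formula \eqref{Had:Asymp2}, I substitute the long-range description of each $F_\nu$ and factor out the common prefactor $|z|^{(n-1)/4-1/2} e^{-\sqrt{z}\, r} r^{-(n-1)/2}$. What remains is
\begin{align*}
     a(x,y,z) = \sum_{\nu=0}^{N} \alpha_{\nu}(x,y)\, |z|^{-\nu/2}\, d_g(x,y)^{\nu}\, a_{\nu}\bigl(d_g(x,y),z\bigr),
\end{align*}
so the identity is tautological once the prefactor is pulled out. The substance is the symbol estimate on $a$: since $|z| \geq c$ and $r \leq \mathrm{diam}\, M$, each factor $|z|^{-\nu/2} r^{\nu}$ is uniformly bounded, and the boundedness of $a$ itself follows.

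The only nontrivial point is verifying the symbolic estimate $|\partial_{x,y}^\alpha a| \leq C_\alpha r^{-\alpha}$. I would argue as follows: away from the diagonal, $d_g$ is smooth with derivatives satisfying $|\partial_{x,y}^k d_g| \lesssim r^{1-k}$ (as for the Euclidean distance, deduced from the smoothness of $d_g^2$ in both variables). The chain rule combined with the $r$-symbolic estimate $|\partial_r^\alpha a_\nu| \leq C_{\alpha,\nu} r^{-\alpha}$ then gives $|\partial_{x,y}^m a_\nu(d_g(x,y),z)| \lesssim r^{-m}$. Similarly $|\partial_{x,y}^l d_g^\nu| \lesssim r^{\nu-l}$. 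Combining via the Leibniz rule on the product $\alpha_\nu \cdot |z|^{-\nu/2} \cdot d_g^\nu \cdot a_\nu$ and absorbing the uniformly bounded factor $|z|^{-\nu/2} r^{\nu}$ yields the claimed bound $\lesssim r^{-\alpha}$, uniformly in $z$ with $|z|\geq c$. This bookkeeping of derivatives of $d_g$ is the main (mild) obstacle; everything else is just assembling estimates from Lemma~\ref{Had:BesselDescr}.
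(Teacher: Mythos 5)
Your argument is correct and matches the paper's intent exactly: the paper gives no explicit proof, merely stating that the lemma is "synthesized" from Lemma~\ref{Had:BesselDescr} by combining the short-range bound and the long-range asymptotics for each $F_\nu$ with the smoothness of the coefficients $\alpha_\nu$, which is precisely the term-by-term computation you carry out, including the correct identification of $a(x,y,z)=\sum_\nu \alpha_\nu\,|z|^{-\nu/2}\,d_g^{\nu}\,a_\nu$ and the Fa\`a di Bruno/Leibniz bookkeeping via $|\partial^k_{x,y}\,d_g|\lesssim d_g^{1-k}$.
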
 
\end{section}
%
%
\begin{section}{Resolvent estimates}
This section is devoted to the proof of $L^p$ estimates on the resolvent.

\subsection{Parametrix estimates}
     
Our goal in this paragraph is to prove that the Hadamard parametrix is bounded on certain Lebesgue spaces. 
Due to the local nature of the construction, the kernel of the parametrix is supported in $U \times U$ with $U$ a small neighbourhood of a point 
$x_0 \in M$; using local coordinates, we might as well suppose that $x_0=0$ and that $U$ is a small ball in $\R^n$ of radius $\eps>0$.
\begin{thm}
\label{LpEst:HadBdness}
 The Hadamard parametrix is a bounded operator 
\begin{align}
\label{LpEst:GenHadParam}
     |z|^{s} \, T_{\rm Had}(z) : L^{p}(M) &\to L^{q}(M)
\end{align}
with a norm uniform with respect to the spectral parameter $z \in \C$, $|z| \geq 1$ when $0 \leq s \leq 1$, $p \leq 2 \leq q$
\begin{align}
\label{LpEst:Scale}
     \frac{1}{p}-\frac{1}{q}+\frac{2s}{n} = \frac{2}{n} , 
\end{align}
and 
\begin{align*}
      \min \bigg(\frac{1}{p}-\frac{1}{2},\frac{1}{2}-\frac{1}{q}\bigg) > \frac{1}{2n},
      \quad \frac{2}{n+1} < \frac{1}{p}-\frac{1}{q} \leq \frac{2}{n}.
\end{align*}
\end{thm}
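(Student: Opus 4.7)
The plan is to follow the strategy of \cite{KRS,Sog}: split the parametrix into a near-diagonal and a far-diagonal piece according to the two asymptotic regimes of Lemma \ref{Had:FDescr}, and treat each with a different tool. Working in a coordinate chart we may assume that $\chi(x,y)F(x,y,z)$ is supported in $U\times U$ with $U$ a small ball in $\R^{n}$, and write $T_{\rm Had}(z) = T^{\rm near}(z) + T^{\rm far}(z)$ where the kernel of $T^{\rm near}$ is $\chi(x,y)F(x,y,z)\eta\bigl(|z|^{1/2}d_{g}(x,y)\bigr)$ for a cutoff $\eta\in\D(\R)$ equal to $1$ near $0$. The first regime of Lemma \ref{Had:FDescr} dominates the kernel of $T^{\rm near}$ by a truncated Riesz potential of order $2$, namely $|K^{\rm near}(x,y)|\lesssim d_{g}(x,y)^{-n+2}\mathbf{1}_{\{d_{g}\lesssim|z|^{-1/2}\}}$. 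Young's inequality (with Hardy-Littlewood-Sobolev at the endpoint $1/p-1/q=2/n$) then yields $\|T^{\rm near}u\|_{L^{q}}\lesssim |z|^{-s}\|u\|_{L^{p}}$ throughout $0\le 1/p-1/q\le 2/n$: a direct computation in polar coordinates gives $\|K^{\rm near}\|_{L^{r}}\simeq |z|^{-s}$ with $1/r=1-1/p+1/q$ and $s$ as prescribed by \eqref{LpEst:Scale}. The restrictions on $(p,q)$ in the theorem therefore all come from the far piece.

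For $T^{\rm far}$, the second estimate of Lemma \ref{Had:FDescr} presents the kernel as $|z|^{(n-3)/4}\,e^{-\sqrt{z}\,d_{g}}\,d_{g}^{-(n-1)/2}\,\widetilde a(x,y,z)$ with $\widetilde a$ symbolic in $d_{g}$. Writing $\sqrt{z}=\tau+i\lambda$ and absorbing the non-oscillatory decay $e^{-\tau d_{g}}$ into the amplitude, one decomposes dyadically $T^{\rm far}=\sum_{k}T_{k}$, with $T_{k}$ supported on the shell $d_{g}\simeq 2^{k}|z|^{-1/2}$ for $0\le k\lesssim\log|z|$. Rescaling each shell to unit size recasts $T_{k}$ as an oscillatory integral operator of the type treated in Section~2, with large parameter $\lambda_{k}\simeq 2^{k}$, complex phase $-(\sqrt{z}/|\sqrt{z}|)\,\widetilde d(\widetilde x,\widetilde y)$, and amplitude supported in a fixed compact set with derivatives bounded uniformly in $k$ and $z$. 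The phase $\widetilde d$ satisfies the corank-one condition \eqref{Osc:CorankEquiv} and the Carleson-Sj\"olin curvature hypothesis uniformly under this rescaling, since $\d_{y}d_{g}(x,y_{0})$ parameterizes (a subset of) the unit cosphere at $y_{0}$, which has non-vanishing Gaussian curvature, and the rescaled metric stays close to Euclidean with uniform derivative control. Theorem \ref{Osc:CSThm}, interpolated with Corollary \ref{Osc:NonDegClry} and the trivial $L^{\infty}$-$L^{1}$ bound, produces per-shell estimates of the form $\|T_{k}u\|_{L^{q}}\lesssim 2^{-k\varepsilon(p,q)}\,(\text{scaling})\,\|u\|_{L^{p}}$; undoing the rescaling turns the dyadic sum into a geometric series in $k$ which assembles into the desired bound $|z|^{-s}$ whenever $\varepsilon(p,q)>0$.

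The main obstacle is the exponent bookkeeping for this dyadic sum. Concretely, after unwinding the rescaling one must verify that the gain $2^{-k\varepsilon(p,q)}$ from the interpolated Carleson-Sj\"olin estimate strictly outweighs the accumulated scaling factors at every $k$ and uniformly in $z$. The two conditions $\min(1/p-1/2,\,1/2-1/q)>1/(2n)$ and $2/(n+1)<1/p-1/q\le 2/n$ in the statement are precisely what guarantees $\varepsilon(p,q)>0$ in the three-way interpolation between the Carleson-Sj\"olin line $q=\tfrac{n+1}{n-1}p'$ of Theorem \ref{Osc:CSThm}, the degenerate $L^{2}$ line $q=p'$ of Corollary \ref{Osc:NonDegClry}, and the trivial $L^{\infty}$-$L^{1}$ bound; the two strict inequalities reflect the two open edges of the resulting hexagonal region. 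Pinning down $\varepsilon(p,q)$ edge by edge, and arranging the interpolation so that the geometric series converges uniformly for $\re\sqrt{z}$ anywhere in $[0,|z|^{1/2}]$, is the most delicate part of the proof.
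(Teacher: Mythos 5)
Your outline is essentially the paper's proof: the near piece $T^{\rm near}$ is controlled by a truncated Riesz kernel via Young and Hardy--Littlewood--Sobolev, the far piece is decomposed dyadically into shells $d_{g}\simeq 2^{k}|z|^{-1/2}$, each shell is rescaled into an oscillatory integral operator with large parameter $2^{k}$ whose phase satisfies the corank-one condition \eqref{Osc:CorankEquiv} and the Carleson--Sj\"olin curvature hypothesis, and the per-shell bounds from Theorem \ref{Osc:CSThm}, Corollary \ref{Osc:NonDegClry}, and Young are interpolated/dualized and summed geometrically.

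What you have deferred is, however, exactly where the region of exponents comes from, so it deserves attention. First, after moving $\e^{-\re\sqrt{z}\,d_{g}}$ into the amplitude the remaining phase is real and equals $\sin(\arg z/2)\,\varrho$, not $-(\sqrt{z}/|\sqrt{z}|)\widetilde d$; the oscillatory-integral theorems of Section~2 assume a real phase with a lower bound on the relevant non-degeneracy/curvature quantities, so one must handle separately the regime $\arg z\in[-\pi/2,\pi/2]$, where $\sin(\arg z/2)$ can degenerate but $\cos(\arg z/2)\geq 1/\sqrt{2}$ gives uniform exponential decay $\e^{-c2^{k}}$ on each shell and Young already sums, and the regime $\arg z\notin[-\pi/2,\pi/2]$, where $|\sin(\arg z/2)|\geq 1/\sqrt{2}$ and the oscillatory estimates apply. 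Second, the explicit per-shell exponents (after undoing the scaling) are $2^{k(\frac{n+1}{2}+n(\frac{1}{q}-\frac{1}{p}))}$ from Young, $2^{-k(\frac{n}{p}-\frac{n+1}{2})}$ from Carleson--Sj\"olin on $q=\frac{n+1}{n-1}p'$, and $2^{k(\frac{n+1}{p'}-\frac{n-1}{2})}$ from the corank-one estimate on $q=p'$; the conditions $\min(1/p-1/2,1/2-1/q)>1/(2n)$ and $2/(n+1)<1/p-1/q$ are precisely what makes each of these negative and hence the geometric series convergent, and the final region is the intersection of the resulting pentagon with the Sobolev strip $0\leq 1/p-1/q\leq 2/n$ coming from $T^{\rm near}$. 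Without carrying out this bookkeeping (and the $\arg z$ dichotomy) the proposal does not yet establish the stated range of $(p,q)$.
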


\begin{center}
\begin{figure}
     \input{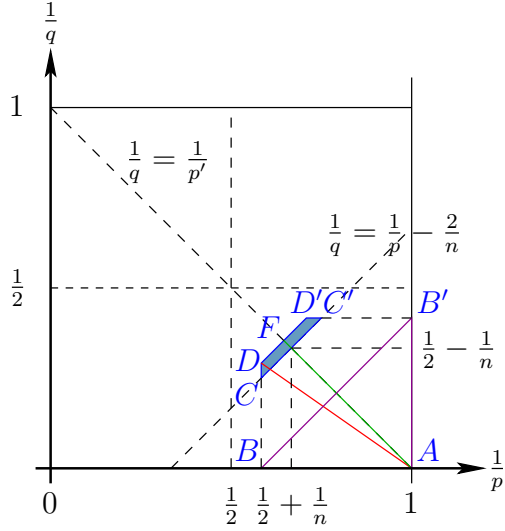}
     \caption{Admissible exponents in Theorem \ref{LpEst:HadBdness}}
     \label{fig:AdmExpTer}
\end{figure}
\end{center}

\begin{rem}
      The region of admissible exponents in Theorem \ref{LpEst:HadBdness} is the trapezium $CC'D'D$ (blue quadrilateral in figure \ref{fig:AdmExpTer})
      with vertices
      \begin{gather*}
          C=\bigg(\frac{1}{2}+\frac{1}{2n},\frac{1}{2}-\frac{3}{2n}\bigg), \quad C'=\bigg(\frac{1}{2}+\frac{3}{2n},\frac{1}{2}-\frac{1}{2n}\bigg). \\
          D=\bigg(\frac{1}{2}+\frac{1}{2n},\frac{(n-1)^2}{2n(n+1)}\bigg), \quad D'=\bigg(\frac{n^2+4n-1}{2n(n+1)},\frac{1}{2}-\frac{1}{2n}\bigg),
      \end{gather*}
       Note that
           $$ F=\bigg(\frac{1}{2}+\frac{1}{n+1},\frac{1}{2}-\frac{1}{n+1}\bigg) $$
       is the middle of $[CC']$.
\end{rem}

Let $\psi_{0} \in \D(\R)$ be supported in $[-1,1]$ and equal one on $[-\tfrac{1}{2},\tfrac{1}{2}]$, let $\psi=\psi_{0}(\cdot/2)-\psi_{0} \in \D(\R)$ 
be supported in $[-1,-\tfrac{1}{2}] \cup [\tfrac{1}{2},1]$, we consider the following dyadic partition of unity
    $$ \psi_0(r)+\sum_{\nu = 0}^{\infty} \psi(2^{-\nu}r) = 1. $$
We denote $\psi_{\nu} = \psi(2^{-\nu} \cdot)$ if $\nu \geq 1$ and decompose Hadamard's parametrix in the following way
\begin{align*}
     T_{\rm Had}(z) = T_{0}(z) + \sum_{\nu=1}^{\infty} T_{\nu}(z)
\end{align*}
where 
\begin{align*}
     T_{\nu}(z)u(x) &=\int_{M}  \chi(x,y) \psi_{\nu}\big(|z|^{\frac{1}{2}}d_g(x,y)\big)F(x,y,z) u(y) \, \dd V_g(y). 
\end{align*}
We first observe that 
\begin{align}
     d_g^2(x,y) = \frac{1}{2} G_{jk}(x,y)(x^j-y^j)(x^k-y^k) 
\end{align}
where $G(x,x)=g(x)$ for all $x \in U$. Indeed, the square of the distance function vanishes at order two at $x=y$ and its Hessian is twice the 
metric since
\begin{align*}
      \nabla^2 \big[d_g^2(\cdot,y)\big]_{x=y} (\theta,\theta) = \frac{\d^2}{\d t^2} d^2_g(\exp_y t\theta,y) \Big|_{t=0} =2 |\theta|_g^2.
\end{align*} 
In particular, in the chart $U$ the geodesic distance is equivalent to the Euclidean distance, i.e. 
\begin{align}
\label{LpEst:EquivDist}
      \frac{1}{C}|x-y|\leq d_g(x,y) \leq C |x-y|.
\end{align}
when $x,y \in U$.

Let us begin by examining the term $T_0(z)$: using the bound \eqref{Had:Asymp1} in Lemma \ref{Had:FDescr}, we majorize the kernel 
by a constant times 
     $$ |x-y|^{-n+2}1_{|x-y| \leq C^{-1}|z|^{-1/2}}. $$ 
Using Young's inequality when $0 \leq 1/p-1/q<2/n$ and Hardy-Littlewood-Sobolev's inequality when $1/p-1/q=2/n$,
and scaling the estimates, bearing in mind the condition \eqref{LpEst:Scale}, we get
\begin{align}
\label{LpEst:T0}
      |z|^{s} \|T_{0}(z)\|_{\mathcal{L}(L^p,L^q)} \lesssim |z|^{\frac{n-2}{2}+\frac{n}{2}\big(\frac{2s}{n}+\frac{1}{p}-\frac{1}{q}-1\big)}
      =1
\end{align}
when $0 \leq 1/p-1/q \leq 2/n$ excluding the endpoints $(0,2/n)$ and $(2/n,0)$ of the segment $1/p-1/q=2/n$.
Now we deal with the other terms: on the support of the kernel of $T_{\nu}(z)$ we have $d_g(x,y) \geq 2^{\nu-1}|z|^{-1/2}$
 and if we use the asymptotic description \eqref{Had:Asymp2} in Lemma \ref{Had:FDescr},  the operator $T_{\nu}(z)$ assumes the form
\begin{multline*}
     T_{\nu}(z)u(x) = \\ |z|^{\frac{n-1}{4}-\frac{1}{2}} \int_M  \e^{-\sqrt{z} d_g(x,y)} \frac{a(x,y,z)}{d_g(x,y)^{\frac{n-1}{2}}}  
      \chi(x,y)\psi_{\nu}\big(|z|^{\frac{1}{2}}d_g(x,y)\big) u(y) \, \dd V_g(y). 
\end{multline*}
If we denote
    $$ A(x,y,z) = \frac{\e^{-\re \sqrt{z} \, d_g(x,y)} }{d_g(x,y)^{\frac{n-1}{2}}}  a(x,y,z) \chi(x,y) \sqrt{\det g(y)} $$ 
then we have 
\begin{align}
\label{LpEst:Symbolic}
     \d^{\alpha}_{x,y} A = \mathcal{O}\Big(d_g(x,y)^{-\frac{n-1}{2}-|\alpha|}\Big)
\end{align}
uniformly in $z$. 

We need to distinguish two further cases: $\arg z \in [-\pi/2,\pi/2]$  where the exponential is uniformly decaying and 
$\arg z \notin [-\pi/2,\pi/2]$ where the exponential has an oscillating behaviour. Let us begin by the first case which is straightforward:
thanks to \eqref{LpEst:EquivDist} the kernel of $T_{\nu}(z)$  may be bounded by a constant times
    $$ |z|^{\frac{n-2}{2}} 2^{-\frac{n-1}{2}\nu} \e^{-\frac{c}{\sqrt{2}}|z|^{1/2}|x-y|}  $$
and Young's inequality provides the bound
\begin{align*}
      |z|^{s} \|T_{\nu}(z)\|_{\mathcal{L}(L^p,L^q)} \lesssim 2^{-\frac{n-1}{2}\nu}\e^{-c2^{\nu}}.
\end{align*}
This (together with \eqref{LpEst:T0}) can be summed as a geometric series 
\begin{align*}
        |z|^{s} \, \|T_{\rm Had}(z)\|_{\mathcal{L}(L^p,L^q)} &\leq \sum_{\nu=0}^{\infty} |z|^s \, \|T_{\nu}(z)\|_{\mathcal{L}(L^p,L^q)} \\
        &\lesssim \sum_{\nu=0}^{\infty} 2^{-\frac{n-1}{2}\nu} \lesssim 1
\end{align*}
so it only remains to consider the case where $\arg z \notin [-\pi/2,\pi/2]$.

\subsubsection*{The case $\frac{1}{p}-\frac{1}{q}>\frac{1}{2}+\frac{1}{2n}$}

We majorize the kernel by a constant times
     $$ |z|^{\frac{n-2}{2}} 2^{-\nu \frac{n-1}{2}} 1_{|x-y| \leq c^{-1}2^{\nu}|z|^{-1/2}} $$ 
and using Young's inequality we get
\begin{align*}
      |z|^{s} \|T_{\nu}(z)\|_{\mathcal{L}(L^p,L^q)} \lesssim 2^{\nu\big(\frac{n+1}{2}+n\big(\frac{1}{q}-\frac{1}{p}\big)\big)}.
\end{align*}
As above we can sum a geometric series and obtain the boundedness of the $|z|^s(T_{\rm Had}(z)-T_0(z))$ as long as the exponent is positive, i.e. when
   $$ \frac{1}{p}-\frac{1}{q}>\frac{1}{2}+\frac{1}{2n}. $$
This region is in the plane $(1/p,1/q)$ the triangle $ABB'$  (delimited by the purple lines in figure \ref{fig:AdmExpTer}).

\subsubsection*{The other cases}

We first observe that, because of the support properties of the kernel, the only terms $T_{\nu}(z)$ which are nonzero are those for which
$2^{\nu}|z|^{-1/2}$ is bounded, and in fact even small if one is prepared to shrink the size of $U$. We consider the following rescaled 
distance function
\begin{align*}
     \varrho_{\nu}(x,y,z) &= |z|^{\frac{1}{2}}2^{-\nu}d_g\big(2^{\nu}|z|^{-\frac{1}{2}}x,2^{\nu}|z|^{-\frac{1}{2}}y\big)  \\
     &=\sqrt{\frac{1}{2}G_{jk}\Big(2^{\nu}|z|^{-\frac{1}{2}}x,2^{\nu}|z|^{-\frac{1}{2}}y\Big)(x^j-y^j)(x^k-y^k)} \\
     &\simeq |x-y|
\end{align*}
and the following rescaled operator
\begin{align*}
     \widetilde{T}_{\nu}(z)u(x) &=|z|^{\frac{n-2}{2}} 2^{-\nu\frac{n-1}{2}} \int  \e^{-i 2^{\nu} \phi_{\nu}(x,y,z)}  a_{\nu}(x,y,z) u(y)  \, \dd y
\end{align*}
which was obtained by the contraction $(x,y) \to \big(2^{\nu}|z|^{-\frac{1}{2}}x,2^{\nu}|z|^{-\frac{1}{2}}y\big)$.
The phase is given by 
\begin{align*}
     \phi_{\nu}(x,y,z) &= 2^{-\nu} \im \sqrt{z} \,  d_g\big(2^{\nu}|z|^{-\frac{1}{2}}x,2^{\nu}|z|^{-\frac{1}{2}}y\big) \\ 
     &= \sin \bigg(\frac{\arg z}{2}\bigg) \, \varrho_{\nu}(x,y,z) \nonumber
\end{align*}
and the amplitude by
\begin{align*}
     a_{\nu}(x,y,z) &= |z|^{-\frac{n-1}{4}} 2^{\nu\frac{n-1}{2}} A\big(2^{\nu}|z|^{-\frac{1}{2}}x,2^{\nu}|z|^{-\frac{1}{2}}y,z\big) \psi\big(\varrho_{\nu}(x,y,z)\big).
\end{align*}
With those notations, we have
\begin{align*}
      T_{\nu}(z)u(x) = \big(2^{-\nu}|z|^{\frac{1}{2}}\big)^{-n} \widetilde{T}_{\nu}(z)v\big(2^{-\nu}|z|^{\frac{1}{2}}x\big), 
      \quad v(x) = u\big(2^{\nu}|z|^{-\frac{1}{2}}x\big).
\end{align*}
Because of \eqref{LpEst:Symbolic} and because $2^{\nu}|z|^{-1/2} \lesssim 1$, both phase and amplitude are uniformly bounded as well as all their derivatives.  Besides, the amplitude $a_\nu(\cdot,\cdot,z)$ is compactly supported in $\R^n \times \R^n$ (with possibly a very large support) but the set
    $$ \big\{x-y : (x,y) \in \supp a_{\nu}(\cdot,\cdot,z) \big\} $$
is contained in a fixed compact set of $\R^n$. 

This is an oscillatory integral operator, whose phase is (a rescaled version of) the Riemannian geodesic distance, and the previous observation 
shows that part of the assumptions in Remark \ref{Osc:RemSupport} are fulfilled. Besides we have
\begin{align*}
     \frac{\d^2 \phi_{\nu}}{\d x \d y}  = \underbrace{\sin \bigg(\frac{\arg z}{2}\bigg)}_{|\cdots| \geq \sqrt{2}/2} 
     \frac{\d^2}{\d x \d y}\sqrt{g_{jk}(x_0)(x^j-y^j)(x^k-y^k)} + \mathcal{O}(\eps)
\end{align*}
this means that the non-degeneracy condition \eqref{Osc:CorankEquiv} is satisfied if $\eps$ is taken small enough. Corollary \ref{Osc:NonDegClry} applies
\begin{align}
      \|\widetilde{T}_{\nu}(z)u\|_{L^{p'}(M)} \lesssim |z|^{\frac{n-2}{2}} 2^{-\nu \frac{n-1}{2}} 2^{-\nu (n-1)/p'} \|u\|_{L^p(M)}
\end{align}
and after rescaling, we obtain
\begin{align}
\label{LpEst:NonDegEst}
      \nonumber
      |z|^{s}\|T_{\nu}(z)\|_{\mathcal{L}(L^p,L^{p'})} &\lesssim (|z|^{1/2}2^{-\nu})^{n\big(\frac{1}{p}-\frac{1}{p'}-1\big)} |z|^{\frac{n-2}{2}}
      2^{-\nu \frac{n-1}{2}} 2^{-\nu (n-1)/p'} \\ &\lesssim 2^{\nu\big(\frac{n+1}{p'} - \frac{n-1}{2}\big)}.
\end{align}
Again, we can sum those estimates into a geometric series provided that
     $$ \frac{1}{p} > \frac{1}{2}+\frac{1}{n+1}.$$
This shows that the operator $|z|^s(T_{\rm Had}(z)-T_0(z))$ is uniformly bounded on the semi-open segment $[AF)$ in figure \ref{fig:AdmExpTer}. 

Since the distance function $r=d_{g}(\cdot,y)$ satisfies the eikonal equation   
    $$ |\dd r|_{g} =1 $$
its differential parametrizes a piece of the cosphere bundle, thus
\begin{align}
     \Sigma_{y_{0}} = \big\{\dd_{x} d_{g}(x,y_{0}),  x \in U\big\} \subset S^*_{x_{0}}M
\end{align}
has everywhere non-vanishing Gaussian curvature. This is also true when $x$ and $y$ are interchanged because
of the symmetry of the function. This remains true after rescaling and it is not hard to see that there is 
a uniform lower bound on the Gaussian curvature when $\eps$ is small enough.   
This means that the Carleson-Sj\"olin condition is satisfied (in its uniform variant mentioned in Remark \ref{Osc:RemSupport})
and Corollary \ref{Osc:CSThm} applies
\begin{align}
      \|\widetilde{T}_{\nu}(z)u\|_{L^q(M)} \lesssim |z|^{\frac{n-2}{2}} 2^{-\nu \frac{n-1}{2}} 2^{-\nu n/q} \|u\|_{L^p(M)}
\end{align}
if $q = (n+1)p'/(n-1)$ and $1 \leq p \leq 2$. After rescaling we finally obtain
\begin{align}
\label{LpEst:EstAC}
      \nonumber
      |z|^{s} \|T_{\nu}(z)\|_{\mathcal{L}(L^p,L^q)} &\lesssim (|z|^{1/2}2^{-\nu})^{n\big(\frac{1}{p}-\frac{1}{q}-1\big)} |z|^{\frac{n-2}{2}}
      2^{-\nu \frac{n-1}{2}}  2^{-\nu n/q} \\ &\lesssim 2^{-\nu \big(\frac{n}{p}-\frac{n+1}{2}\big)}.
\end{align}
Those estimates can be summed into a geometric series when
  $$ \frac{1}{p} > \frac{1}{2}+\frac{1}{2n}. $$
This shows that the Hadamard parametrix is bounded on the red segment $[AC)$ in figure \ref{fig:AdmExpTer}.

If we sum up the results obtained so far: we have shown the boundedness of $|z|^{s}(T_{\rm Had}(z)-T_0(z))$ on the segments $[AB)$ (by Young's inequality),
 $[AD)$ (by the Carleson-Sj\"olin theory), $[AF)$ (by the non-degeneracy corollary \ref{Osc:NonDegClry}) in figure \ref{fig:AdmExpTer}. 
By interpolation and duality, this shows that $|z|^{s}(T_{\rm Had}(z)-T_0(z))$ is bounded on the pentagon $ABDD'B'$ in figure \ref{fig:AdmExpTer}.
This ends the case $\arg z \notin [-\pi/2,\pi/2]$ and hence the boundedness  of Hadamard's parametrix \eqref{LpEst:GenHadParam}.
This completes the proof of Theorem \ref{LpEst:HadBdness}.

\begin{rem}
     In fact, the computation made to obtain the bound on the segment $[A,F)$ is redundant since the bound can be obtained by interpolation and duality 
     from the bound on the segment $[A,D)$.
\end{rem}

\subsection{Additional bounds}

If one is prepared to loose some powers of the spectral paramater $|z|$, one can improve the range of allowed exponents.
Indeed, the estimate \eqref{LpEst:EstAC} can still be summed on $2^{\nu}\leq|z|^{1/2}$ when $\frac{1}{p} < \frac{1}{2}+ \frac{1}{2n}$
\begin{align*}
        |z|^{s} \, \|T_{\rm Had}(z)\|_{\mathcal{L}(L^p,L^q)} &\leq \sum_{\nu=0}^{\infty} |z|^s \, \|T_{\nu}(z)\|_{\mathcal{L}(L^p,L^q)} \\
        &\lesssim \sum_{\nu \geq 0 \, : \, 2^{\nu}|z|^{-1/2} \leq 1} 2^{\nu\big(\frac{n+1}{2} - \frac{n}{p}\big)} 
        \lesssim |z|^{\big(\frac{n+1}{4} - \frac{n}{2p}\big)}.
\end{align*}
Note that when $q=\frac{n+1}{n-1}p'$ we have
     $$ (n+1)\bigg(\frac{1}{p}-\frac{1}{q}\bigg) = \frac{2n}{p} - (n-1) $$ 
and therefore
     $$  |z|^{-s+\big(\frac{n+1}{4} - \frac{n}{2p}\big)} = |z|^{\frac{n-1}{4}\big(\frac{1}{p}-\frac{1}{q}\big)-\frac{1}{2}}. $$
The values of $(1/p,1/q)$ obtained in this estimate correspond to the semi-open segment $(DE]$ on the figure \ref{fig:AdmExpQuar}.
Similarly, the estimate \eqref{LpEst:NonDegEst} can still be summed when $\frac{1}{p}<\frac{1}{2}+\frac{1}{n+1}$
\begin{align*}
        |z|^{s} \, \|T_{\rm Had}(z)\|_{\mathcal{L}(L^p,L^{p'})} &\leq \sum_{\nu=0}^{\infty} |z|^s \, \|T_{\nu}(z)\|_{\mathcal{L}(L^p,L^{p'})} \\
        &\lesssim \sum_{\nu \geq 0 \, : \, 2^{\nu}|z|^{-1/2} \leq 1} 2^{\nu\big(\frac{n+1}{p'} - \frac{n-1}{2}\big)} 
        \lesssim |z|^{\big(\frac{n+1}{2p'} - \frac{n-1}{4}\big)}.
\end{align*}
Note that we have
     $$ (n+1)\bigg(\frac{1}{p}-\frac{1}{p'}\bigg) = -\frac{2(n+1)}{p'} + (n+1) $$ 
and therefore we retrieve the same bound as above
     $$  |z|^{-s+\big(\frac{n+1}{2p'} - \frac{n-1}{4}\big)} = |z|^{\frac{n-1}{4}\big(\frac{1}{p}-\frac{1}{q}\big)-\frac{1}{2}}. $$
The values of $(1/p,1/q)$ obtained in this estimate correspond to the semi-open segment $(FG]$ on the figure \ref{fig:AdmExpQuar}.
\begin{center}
\begin{figure}
     \input{ExponentsBis_t.tex}
     \caption{Admissible exponents in Theorem \ref{LpEst:HadBdnessBis}}
     \label{fig:AdmExpQuar}
\end{figure}
\end{center}

By duality and interpolation, we get the following theorem.
\begin{thm}
\label{LpEst:HadBdnessBis}
 The Hadamard parametrix is a bounded operator 
\begin{align}
      T_{\rm Had}(z) : L^{p}(M) &\to L^{q}(M)
\end{align}
when $1 \leq p \leq 2 \leq q$ and 
   $$ \frac{1}{p}-\frac{1}{q} < \frac{2}{n+1}, \quad  \frac{n-1}{n+1} \, \frac{1}{p'} \leq  \frac{1}{q} \leq \frac{n+1}{n-1}  \,\frac{1}{p'}, $$
with a norm bounded by
\begin{align}
\label{LpEst:LpBoundsBis}
      \|T_{\rm Had}(z)\|_{\mathcal{L}(L^p,L^q)} \leq C |z|^{\frac{n-1}{4}\big(\frac{1}{p}-\frac{1}{q}\big)-\frac{1}{2}}.
\end{align}
\end{thm}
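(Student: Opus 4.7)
The proof plan continues the dyadic analysis from the proof of Theorem~\ref{LpEst:HadBdness}, with the central new idea being that in the oscillatory regime the sum is truncated automatically. Recall the decomposition $T_{\rm Had}(z)=T_0(z)+\sum_{\nu\geq 1}T_\nu(z)$, where $T_\nu(z)$ has kernel supported on $d_g(x,y)\simeq 2^\nu|z|^{-1/2}$. Because $d_g$ is bounded on the small chart $U$, we have $T_\nu(z)\equiv 0$ for $2^\nu|z|^{-1/2}\gtrsim 1$, so only finitely many dyadic pieces contribute. The case $\arg z\in[-\pi/2,\pi/2]$ is still handled by the exponential decay as before, giving the scaling bound $|z|^{-s}$, which one checks is stronger than the claimed bound throughout the region since $-s\leq\tfrac{n-1}{4}(\tfrac1p-\tfrac1q)-\tfrac12$ is equivalent to $\tfrac1p-\tfrac1q\leq\tfrac{2}{n+1}$. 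The low-frequency piece $T_0(z)$ is treated by Young/Hardy--Littlewood--Sobolev exactly as in \eqref{LpEst:T0}, since we stay in the range $0\leq\tfrac1p-\tfrac1q<\tfrac{2}{n}$.

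The work to do is then the oscillatory case $\arg z\notin[-\pi/2,\pi/2]$ for the tails $T_\nu(z)$, $\nu\geq 1$. On the Carleson--Sj\"olin line $q=\tfrac{n+1}{n-1}p'$, I would start from \eqref{LpEst:EstAC}, which gives $|z|^s\|T_\nu(z)\|_{L^p\to L^q}\lesssim 2^{-\nu(n/p-(n+1)/2)}$. When $\tfrac1p<\tfrac12+\tfrac{1}{2n}$ this exponent is negative, but summing nonetheless over the finite range $2^\nu\leq C|z|^{1/2}$ produces an extra factor of $|z|^{(n+1)/4-n/(2p)}$; combined with the prefactor $|z|^{-s}$ and the scaling identity, a short computation (as recorded just before the statement) yields exactly the claimed bound $|z|^{(n-1)/4(1/p-1/q)-1/2}$ along the semi-open segment $(DE]$. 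The same argument, applied to \eqref{LpEst:NonDegEst} in place of \eqref{LpEst:EstAC}, handles the dual line $q=p'$ when $\tfrac1p<\tfrac12+\tfrac{1}{n+1}$, giving the bound on the segment $(FG]$.

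To conclude, I would combine these with the bounds already established on the pentagon $ABDD'B'$ from Theorem~\ref{LpEst:HadBdness}, and then invoke duality (the kernel of ${}^tT_{\rm Had}(z)$ has the same structure, as noted in Section~\ref{Sec:Had}) together with complex interpolation (Stein--Weiss) to fill in the interior of the trapezoidal region described in the theorem. The two open vertices $E$ and $G$, and their duals, sit on the boundary lines $\tfrac{1}{q}=\tfrac{n-1}{n+1}\tfrac1{p'}$ and $\tfrac{1}{q}=\tfrac{n+1}{n-1}\tfrac1{p'}$, so interpolation between the CS-type bounds at these points and the previously established bounds on the inner pentagon covers the region $\tfrac1p-\tfrac1q<\tfrac{2}{n+1}$ with the correct exponent in $|z|$, since the exponent $\tfrac{n-1}{4}(\tfrac1p-\tfrac1q)-\tfrac12$ is affine in $(\tfrac1p,\tfrac1q)$ and agrees with $-s$ on the interpolation boundary with the old pentagon.

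The main obstacle is bookkeeping rather than analysis: verifying that the finitely many segments on which the new bound is established form, together with the pentagon from Theorem~\ref{LpEst:HadBdness} and their duals, a convex configuration whose convex hull is precisely the region in the theorem, and that the interpolated exponent of $|z|$ continues to match $\tfrac{n-1}{4}(\tfrac1p-\tfrac1q)-\tfrac12$ throughout. The only genuinely new estimate is the observation that the geometric series is \emph{truncated} by the support condition $2^\nu|z|^{-1/2}\lesssim 1$, which converts a non-summable series into a controlled polynomial loss in $|z|$.
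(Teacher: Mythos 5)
The proposal is correct and follows essentially the same route as the paper: it identifies the key new point that the dyadic sum is automatically truncated at $2^\nu \lesssim |z|^{1/2}$, converting the divergent geometric series from \eqref{LpEst:EstAC} and \eqref{LpEst:NonDegEst} into the polynomial loss $|z|^{(n+1)/4-n/(2p)}$, which after using the scaling relation gives the stated exponent $|z|^{\frac{n-1}{4}(\frac1p-\frac1q)-\frac12}$ on the segments $(DE]$ and $(FG]$, and then closes by duality and interpolation against the pentagon from Theorem~\ref{LpEst:HadBdness}. The only cosmetic difference is that you spell out the checks for $T_0(z)$ and the case $\arg z \in [-\pi/2,\pi/2]$, which the paper leaves implicit.
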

The extended set of admissible exponents is the pentagon $DEGE'D'$, represented by the union of the green and red sets in figure \ref{fig:AdmExpQuar}.
These estimates (or rather the endpoint $p=2$, $q=2(n+1)/(n-1)$) were proved by Sogge in \cite{Sog}
\begin{align*}
      \|T_{\rm Had}(z)u\|_{L^{\frac{2(n+1)}{n-1}}(M)} \lesssim |z|^{-\frac{n+3}{4(n+1)}}  \|u\|_{L^2(M)}. 
\end{align*}
\begin{center}
\begin{figure}[h]
     \input{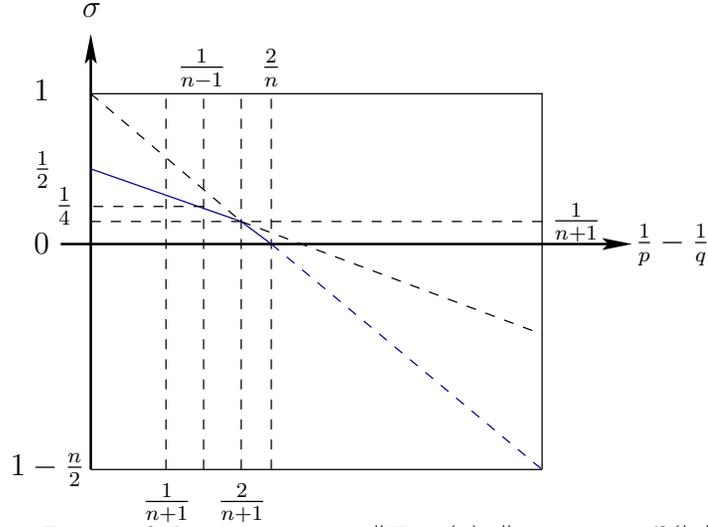}
     \caption{Decay of the parametrix: $ \|T_{\rm Had}(z)u\|_{\mathcal{L}(L^p,L^q)} = \mathcal{O}(|z|^{-\sigma})$}
     \label{fig:DecayExp}
\end{figure}
\end{center}      
The combination of Theorems \ref{LpEst:HadBdness} and \ref{LpEst:HadBdnessBis} gives the following bound on the parametrix
\begin{align}
\label{LpEst:AddDecayParam}
      \|T_{\rm  Had}(z)u\|_{\mathcal{L}(L^p,L^q)}  \leq C |z|^{-\sigma}
\end{align}
in the polygon $CDEGE'D'C'$ (except for the segments $[CD]$, $[DD']$ and $[D'C']$), where the order $\sigma$ is a piecewise 
linear function of $d= 1/p-1/q$
\begin{align}
\label{LpEst:Gain}
     \sigma = \begin{cases}   \displaystyle -\frac{n-1}{4} d+ \frac{1}{2}  &\displaystyle \text{ when } 0 \leq d \leq \frac{2}{n+1}  \\ 
     \displaystyle  -\frac{n}{2} d + 1 & \displaystyle \text{ when } \frac{2}{n+1} \leq d \leq 1 \end{cases}.
\end{align}
The graph of this function is represented in figure \ref{fig:DecayExp}.

We finish by observing that as a function of the $y$ variable, the kernel of the $T_{\rm Had}(z)-T_0(z)$ is $L^r$-integrable 
when $r<2n/(n-1)$ since 
\begin{align*}
     \big|\big(T_{\rm Had}(z)-T_0(z)\big)(x,y)\big| \lesssim |z|^{\frac{n-3}{4}} d_g(x,y)^{-\frac{n-1}{2}}.
\end{align*}
By Young's inequality, this provides the trivial bound
\begin{align*}
     \|(T_{\rm Had}(z)-T_0(z))u\|_{L^{\infty}(M)} \leq |z|^{\frac{n-3}{4}} \|u\|_{L^p(M)}, \quad \frac{1}{p}<\frac{1}{2}+\frac{1}{2n}, 
\end{align*}
and by interpolation, we obtain the following additional bounds 
\begin{multline*}
      \|(T_{\rm Had}-T_0)(z)u\|_{L^q(M)} \leq C
          |z|^{\frac{n-3}{4}-\frac{n}{2q}}  \|u\|_{L^p(M)} \\ \text{ if }  \frac{1}{q}  \leq \frac{n-1}{n+1} \, \frac{1}{p'} \quad \text{ and } \quad
          \frac{1}{p} < \frac{1}{2}+\frac{1}{2n}.
\end{multline*}
Since $T_0(z)$ satisfies similar estimates with better decay in $|z|$ when $1/p \leq 1/2+1/2n$, the previous estimates remain true
for the Hadamard parametrix. These estimates with their dual provide  the following additional boundedness properties of the Hadamard parametrix.
\begin{lemma}
\label{LpEst:AddEstLemHad}
      Let $1 \leq p \leq 2 \leq q$ be such that $1/p-1/q \leq 2/n$, the Hadamard parametrix satisfies the following bounds 
      \begin{align*}
      \|T_{\rm Had}(z)\|_{\mathcal{L}(L^q,L^p)} \leq C
      \begin{cases}
          |z|^{\frac{n-3}{4}-\frac{n}{2q}}  & \text{ if }  \frac{1}{q}  \leq \frac{n-1}{n+1} \, \frac{1}{p'} \text{ and } 
          \frac{1}{p} < \frac{1}{2}+\frac{1}{2n} \\    
          |z|^{\frac{n-3}{4}-\frac{n}{2p'}} & \text{ if }   \frac{1}{q} \geq \frac{n+1}{n-1}  \,\frac{1}{p'} \text{ and } 
          \frac{1}{q} > \frac{1}{2}-\frac{1}{2n}
       \end{cases}.
\end{align*}
\end{lemma}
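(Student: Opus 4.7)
The plan is to combine the kernel asymptotics of Lemma \ref{Had:FDescr} with the Sogge-type bound of Theorem \ref{LpEst:HadBdnessBis} via H\"older's inequality and real interpolation, to obtain an underlying $L^p\to L^q$ estimate for $T_{\rm Had}(z)$, and then to pass to the $L^q\to L^p$ form required by the lemma either via the compact embedding $L^q(M)\hookrightarrow L^p(M)$ (Case 1) or via duality with respect to the transpose (Case 2).

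I first derive the $L^p\to L^q$ estimate on $T_{\rm Had}(z)$. On the region $d_g(x,y)\geq |z|^{-1/2}$ carrying the kernel of $T_{\rm Had}(z)-T_0(z)$, the expansion \eqref{Had:Asymp2} yields $|F(x,y,z)|\lesssim |z|^{(n-3)/4}\,d_g(x,y)^{-(n-1)/2}$, since $|\e^{-\sqrt z\,d_g(x,y)}|\leq 1$ because $\re\sqrt z\geq 0$ on $\C\setminus\R_-$. Because $d_g(x,\cdot)^{-(n-1)/2}$ lies in $L^{p'}(M)$ uniformly in $x$ as long as $p'<2n/(n-1)$, i.e.\ $1/p<1/2+1/(2n)$, H\"older in $y$ gives the endpoint $(T_{\rm Had}(z)-T_0(z))\colon L^p\to L^\infty$ with norm $\lesssim|z|^{(n-3)/4}$. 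Linearly interpolating this against the Sogge endpoint $L^p\to L^{q_\ast}$ at $q_\ast=(n+1)p'/(n-1)$ furnished by Theorem \ref{LpEst:HadBdnessBis} (whose $|z|$-power is $(n-1)(1/p-1/q_\ast)/4-1/2$), a short convex-combination check shows that the interpolated exponent is precisely $(n-3)/4-n/(2q)$; this yields $(T_{\rm Had}(z)-T_0(z))\colon L^p\to L^q$ with norm $\lesssim|z|^{(n-3)/4-n/(2q)}$ on the segment $1/q\in[0,1/q_\ast]$. For $T_0(z)$, whose kernel is controlled by $d_g(x,y)^{-(n-2)}\mathbf 1_{d_g\leq c|z|^{-1/2}}$, Young's inequality (or Hardy--Littlewood--Sobolev at the endpoint) yields a bound of the form $|z|^{(n/2)(1/p-1/q)-1}$, which is dominated by $|z|^{(n-3)/4-n/(2q)}$ exactly when $1/p\leq 1/2+1/(2n)$; summing the two pieces, the same estimate holds for the full $T_{\rm Had}(z)$.

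Case 1 is then immediate: on the compact manifold $M$ one has $\|u\|_{L^p}\leq \mathrm{vol}(M)^{1/p-1/q}\|u\|_{L^q}$ for $q\geq p$, so sandwiching $T_{\rm Had}(z)$ between two copies of this continuous embedding converts the $L^p\to L^q$ bound into the desired $L^q\to L^p$ bound with the same $|z|$-power. Case 2 is the dual statement. The transpose ${}^{t}T_{\rm Had}(z)$ carries the kernel $F(y,x,z)$, which by the symmetry of Hadamard's construction recorded at the end of Section \ref{Sec:Had} enjoys the same pointwise bounds as $F(x,y,z)$; applying Case 1 to the transpose and then relabeling $(p,q)\mapsto(q',p')$ yields the $L^q\to L^p$ bound for $T_{\rm Had}(z)$ with $|z|$-power $(n-3)/4-n/(2p')$ under the conditions $1/q\geq(n+1)/((n-1)p')$ and $1/q>1/2-1/(2n)$, completing the proof.

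The only place where a genuine computation is required is the convex-combination check producing $(n-3)/4-n/(2q)$ from $(n-3)/4$ and the Sogge exponent $(n-1)(1/p-1/q_\ast)/4-1/2$; all other steps are direct consequences of Lemma \ref{Had:FDescr}, Theorem \ref{LpEst:HadBdnessBis}, the finite volume of $M$, and the symmetry of the Hadamard parametrix.
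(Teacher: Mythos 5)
Your computation reproducing the paper's own argument is correct: the kernel bound $|F(x,y,z)| \lesssim |z|^{(n-3)/4} d_g(x,y)^{-(n-1)/2}$ from Lemma \ref{Had:FDescr}, the $L^p \to L^\infty$ endpoint for $T_{\rm Had}(z) - T_0(z)$ when $1/p < 1/2 + 1/(2n)$, the Riesz--Thorin interpolation (at fixed $p$) against the Sogge-type bound of Theorem \ref{LpEst:HadBdnessBis} on the segment $1/q = \frac{n-1}{n+1}\frac{1}{p'}$ giving the exponent $(n-3)/4 - n/(2q)$, the check that the $T_0(z)$ contribution $|z|^{(n/2)(1/p - 1/q) - 1}$ is dominated precisely when $1/p \leq 1/2 + 1/(2n)$, and the transposition/duality argument to obtain the second case — this is exactly the paper's route.

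The only thing worth flagging is the compact-embedding step in your Case 1. You appear to be taking the lemma's displayed $\mathcal{L}(L^q, L^p)$ at face value and then passing from the $L^p \to L^q$ bound you derived to an $L^q \to L^p$ bound. But the preceding paragraph in the paper states the estimate as $\|(T_{\rm Had} - T_0)(z)u\|_{L^q(M)} \leq C |z|^{(n-3)/4 - n/(2q)} \|u\|_{L^p(M)}$ — i.e.\ from $L^p$ into $L^q$ — and that is what the lemma is intended to record, so the $\mathcal{L}(L^q, L^p)$ is almost certainly a typo for $\mathcal{L}(L^p, L^q)$. Your $L^p \to L^q$ derivation already establishes the lemma in its intended form; the finite-volume embedding step should simply be dropped. (It is not harmful, since you built the stronger $L^p\to L^q$ bound first, but note that had you attempted to prove only the literal $L^q\to L^p$ statement directly, the sandwich trick would not yield the $L^p \to L^q$ version that the paper actually needs.)

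One small point of hygiene: you interpolate ``$(T_{\rm Had} - T_0)(z): L^p \to L^\infty$'' against ``the Sogge endpoint $L^p \to L^{q_*}$,'' but Theorem \ref{LpEst:HadBdnessBis} is stated for the full $T_{\rm Had}(z)$. To interpolate one needs both endpoints for the same operator; this is fine because on the Sogge segment $T_0(z)$ satisfies the Sogge bound (even better, as you verify), so $T_{\rm Had}(z) - T_0(z)$ does too, but this should be said explicitly rather than left implicit.
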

The range of exponents covered by this theorem is the union of the two violet trapezia in figure \ref{fig:AdmExpQuar}.
\begin{center}
\begin{figure}[h]
     \input{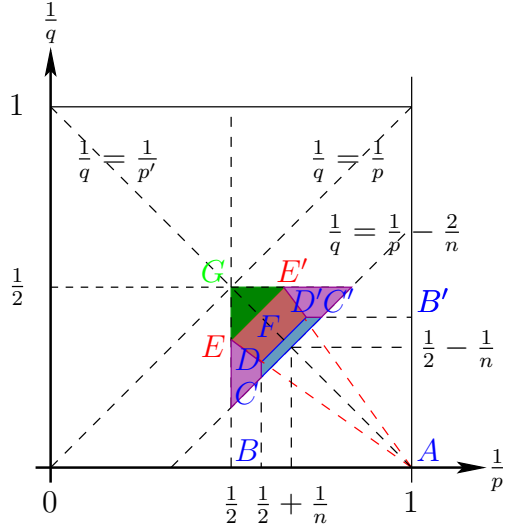}
     \caption{Extended set of admissible exponents}
     \label{fig:AdmExpQuar}
\end{figure}
\end{center}
In particular, we have%
\begin{align}
\label{LpEst:CrucialEst}
      \begin{split}
           \|T_{\rm Had}(z)u\|_{L^{\frac{2n}{n-2}}(M)} &\lesssim |z|^{-\frac{1}{4}}  \|u\|_{L^2(M)} \\
           \|T_{\rm Had}(z)u\|_{L^2(M)} &\lesssim |z|^{-\frac{1}{4}}  \|u\|_{L^{\frac{2n}{n+2}}(M)}. 
      \end{split}
\end{align}
These estimates will prove useful in the final argument proving the resolvent estimates when dealing with the remainders.

\subsection{Remainder estimates}

We now deal with the remainder term $S(z)$: the results are similar than for the Hadamard parametrix (in fact the range of admissible exponents
is larger) but the proof is simpler due to the lack of singularities since the kernel is supported away from the diagonal in $M \times M$. 
\begin{lemma}
\label{LpEst:RemainderEstLem}
The operator $S(z)$ defined in \eqref{Had:ParmaIdentity} is a bounded operator 
\begin{align}
      S(z) : L^{p}(M) &\to L^{q}(M)
\end{align}
when $1 \leq p \leq 2 \leq q$ with a norm bounded by
\begin{align*}
      \|S(z)\|_{\mathcal{L}(L^p,L^q)} \leq C
      \begin{cases}
          |z|^{\frac{n-1}{4}-\frac{n}{2q}} & \text{ if }  \frac{1}{q}  \leq \frac{n-1}{n+1} \, \frac{1}{p'} \\ 
          |z|^{\frac{n-1}{4}\big(\frac{1}{p}-\frac{1}{q}\big)} & \text{ if }  \frac{n-1}{n+1} \, \frac{1}{p'} \leq  \frac{1}{q} 
          \leq \frac{n+1}{n-1}  \,\frac{1}{p'}, \\    
          |z|^{\frac{n-1}{4}-\frac{n}{2p'}} & \text{ if } \frac{1}{q} \geq \frac{n+1}{n-1}  \,\frac{1}{p'}
       \end{cases}.
\end{align*}
\end{lemma}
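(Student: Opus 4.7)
The plan is to split $S(z)=S_1(z)+S_2(z)$ as in \eqref{Had:ParmaIdentity} and exploit the fact that both kernels are supported away from the diagonal of $M\times M$. For $S_2(z)$, whose kernel is $\chi(x,y)(\Delta_g\alpha_N)F_N(d_g(x,y),z)$, one chooses $N>(n-1)/2$ so that the bound \eqref{Had:RemGood} forces the kernel to be uniformly of size $|z|^{-1/2}$; since $M$ is compact, Young's inequality immediately gives $\|S_2(z)\|_{\mathcal{L}(L^p,L^q)}\lesssim|z|^{-1/2}$ for every pair $1\le p\le q\le\infty$, which is stronger than any bound claimed in the lemma.

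For $S_1(z)$, the commutator $[\chi(\cdot,y),\Delta_g]$ is a first-order differential operator in $x$ whose coefficients $\grad_g\chi$ and $\Delta_g\chi$ vanish near the diagonal, so its kernel is supported in a shell $\{c\le d_g(x,y)\le c'\}$ (chosen inside the injectivity radius and inside $U\times U$). On this shell the asymptotic \eqref{Had:Asymp2} of Lemma \ref{Had:FDescr} controls both $F$ and $\grad_x F$, each $x$-differentiation contributing only an extra factor of $\sqrt{z}$ coming from the exponential. Hence the kernel of $S_1(z)$ takes the form
\begin{align*}
    K(x,y,z)=|z|^{\frac{n-1}{4}}\,e^{-\sqrt{z}\,d_g(x,y)}\,b(x,y,z)
\end{align*}
with $b$ smooth, compactly supported in the shell, and all its derivatives uniformly bounded in $z$.

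I then distinguish the two cases of Section 4. When $\arg z\in[-\pi/2,\pi/2]$, one has $\re\sqrt{z}\gtrsim|z|^{1/2}$, so the factor $e^{-\re\sqrt{z}\,d_g}$ yields decay $e^{-c|z|^{1/2}}$ and Young's inequality alone produces bounds vastly better than required. When $\arg z\notin[-\pi/2,\pi/2]$, one has $|\im\sqrt{z}|\simeq|z|^{1/2}$, and $S_1(z)$ becomes an oscillatory integral operator with phase $\phi=d_g$, large parameter $\lambda=|\im\sqrt{z}|\simeq|z|^{1/2}$, and amplitude of size $|z|^{(n-1)/4}$ that is uniformly bounded with all derivatives. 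The mixed-Hessian non-degeneracy \eqref{Osc:CorankEquiv} and the $n\times n$ Carleson--Sj\"olin curvature condition for the distance phase $d_g$ are satisfied (uniformly, in the sense of Remarks \ref{Osc:Uniform} and \ref{Osc:RemSupport}), exactly as already verified during the proof of Theorem \ref{LpEst:HadBdness}. Corollary \ref{Osc:NonDegClry} then gives the $L^p\!\to\!L^{p'}$ bound $\|S_1(z)\|_{\mathcal{L}(L^p,L^{p'})}\lesssim|z|^{\frac{n-1}{4}-\frac{n-1}{2p'}}$, which matches the middle formula of the lemma on the dual line $q=p'$; Theorem \ref{Osc:CSThm} yields the Carleson--Sj\"olin endpoint bound $\|S_1(z)\|_{\mathcal{L}(L^p,L^q)}\lesssim|z|^{\frac{n-1}{4}-\frac{n}{2q}}$ at $q=(n+1)p'/(n-1)$, $1\le p\le 2$, which is the first formula (and, by duality, the third). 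Interpolation between these two endpoint families, together with the symmetric estimate obtained by transposition, delivers the three regions of exponents stated in the lemma.

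The main obstacle is essentially just the case split on $\arg z$, together with checking that the uniformity hypotheses of Remarks \ref{Osc:Uniform} and \ref{Osc:RemSupport} survive after shrinking to the shell; once that is verified, the absence of any diagonal singularity means the dyadic decomposition used for the parametrix is no longer needed, and the argument is in fact cleaner than that of Theorem \ref{LpEst:HadBdness}.
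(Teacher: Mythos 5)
Your decomposition $S(z)=S_1(z)+S_2(z)$, the treatment of $S_2(z)$ via \eqref{Had:RemGood}, the reduction of the kernel of $S_1(z)$ to the form $|z|^{\frac{n-1}{4}}\e^{-\sqrt{z}\,d_g(x,y)}b(x,y,z)$ with $b$ smooth and supported in an off-diagonal shell, and the case split on $\arg z$ all match the paper's argument exactly, and your verification that the non-degeneracy and Carleson--Sj\"olin conditions carry over (uniformly, in the sense of Remarks \ref{Osc:Uniform} and \ref{Osc:RemSupport}) is correct.

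There is, however, a genuine gap in the final interpolation step for the oscillating case $\arg z\notin[-\pi/2,\pi/2]$. You invoke only Corollary \ref{Osc:NonDegClry} (giving the dual line $q=p'$) and Theorem \ref{Osc:CSThm} (giving the Carleson--Sj\"olin line $1/q=\tfrac{n-1}{n+1}\tfrac{1}{p'}$ and, by transposition, its dual). All three of these segments emanate from the vertex $(1/p,1/q)=(1,0)$; their convex hull is exactly the middle cone $\tfrac{n-1}{n+1}\tfrac{1}{p'}\le \tfrac{1}{q}\le \tfrac{n+1}{n-1}\tfrac{1}{p'}$. Interpolation among them cannot reach the first region $\tfrac{1}{q}<\tfrac{n-1}{n+1}\tfrac{1}{p'}$ (or, dually, the third), since those regions lie strictly outside this convex hull. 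To cover them one needs the additional Young's inequality endpoint $\|S_1(z)\|_{\mathcal{L}(L^p,L^\infty)}\lesssim|z|^{\frac{n-1}{4}}$ (valid since the kernel is $\lesssim|z|^{(n-1)/4}$ on a bounded set), which you use only for $\arg z\in[-\pi/2,\pi/2]$; the paper explicitly lists Young's inequality among the ingredients in the oscillating case for precisely this reason. This omission is not cosmetic: the bound the paper actually needs in the proof of Lemma \ref{LpEst:LpL2EstLem} is at $p=2$, $q=\tfrac{2n}{n-2}$, and since $\tfrac{n-2}{2n}<\tfrac{n-1}{2(n+1)}$ this point lies strictly inside the first region, outside what your interpolation reaches. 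Adding the Young $L^p\to L^\infty$ endpoint to your list and interpolating fixes the proof.
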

\begin{proof}
      The kernel of $S_2(z)$ is uniformly bounded by $|z|^{-1/2}$ because of \eqref{Had:RemGood}, hence $S_2(z)$ is a bounded operator
      from $L^p(M)$ to $L^q(M)$ with norm bounded by $|z|^{-1/2}$.  For this part of the operator we actually get better bounds than expected since 
      $|z| \geq \delta$. We now concentrate on the part $S_1(z)$ of the operator: because of Lemma \ref{Had:FDescr} 
      and the support of $\grad_g \chi(\cdot,y)$, the kernel of this operator looks like
      \begin{align*}
            |z|^{\frac{n-1}{4}} \e^{-\sqrt{z} \, d_g(x,y)} b(x,y,z) 
      \end{align*}
      where $b$ is a smooth function. 
      
      When $\arg z \in [-\pi/2,\pi/2]$, Young's inequality provides a better bound
           $$ \|S_1(z)u\|_{L^q(M)} \lesssim |z|^{-\frac{n+1}{4}+\frac{n}{2}\big(\frac{1}{p}-\frac{1}{q}\big)}\e^{-c|z|^{1/2}} \|u\|_{L^p(M)}. $$
      When $\arg z \notin [-\pi/2,\pi/2]$, the bound follows from Young's inequality, Theorems \ref{Osc:DegL2Thm} and 
      \ref{Osc:CSThm}, and duality and interpolation.
\end{proof}

\subsection{Resolvent estimates}

Before actually turning our attention to resolvent estimates, let us first patch our local constructions into a global parametrix. 
We cover the manifold $M$ with finitely many open geodesic balls $U_j$ of radius $\eps$ contained in charts of $M$.
We consider a partition of unity 
    $$ 1 = \sum_{j=1}^J \chi_j^2, \quad \supp \chi_j \subset U_j $$
subordinated to the covering $(U_j)_{1 \leq j \leq J}$, and define $T_{\rm Had}^{(j)}(z)$ to be the local Hadamard parametrix
constructed in section \ref{Sec:Had} using  $\chi_j(x) \otimes \chi_j(y)$ as the cutoff function. By a slight abuse of notation,
we keep $T_{\rm Had}(z)$ to denote the global parametrix
   $$ T_{\rm Had}(z) = \sum_{j=1}^J T_{\rm Had}^{(j)}(z) $$
and similarly for the remainder
   $$ S(z) = \sum_{j=1}^J  S^{(j)}(z). $$
Having constructed those operators, we indeed get a global right parametrix
\begin{align}
\label{LpEst:ParamIdentity}
   (-\Delta_g+z)T_{\rm Had}(z) = \id +S(z). 
\end{align}
As a finite sum of operators, $T_{\rm Had}(z)$ shares the same continuity properties as the local parametrices, and this is also true of $S(z)$.

We are now ready to prove the resolvent estimates. We use ideas coming from \cite{KT} and also \cite{DSF,DSF1} to deal with the remainder terms.
\subsubsection{$L^2$ estimates}
Let $0=\lambda_0 \leq \lambda_1 \leq \dots \leq \lambda_k \leq \dots$ be the sequence of eigenvalues of the (positive)
Laplacian $-\Delta_g$ repeated with multiplicity and $(\psi_k)_{k=0}^{\infty}$ be the corresponding sequence of unit eigenfunctions. We denote
     $$ \widehat{f}(k) = \int_M f \,  \overline{\psi_k} \, \dd V_g $$
the Fourier coefficient of the function $f$ corresponding to the eigenfunction~$\psi_k$. 
\begin{lemma}
\label{LpEst:L2ResEst}
     Let $(M,g)$ be a compact Riemannian manifold (without boundary) of dimension $n \geq 3$,
     for all $z\in \C \setminus \R_-$  we have 
    \begin{align*}
         \|u\|_{L^2(M)} \leq |z|^{-\frac{1}{2}} (\re \sqrt{z})^{-1} \|(-\Delta_g+z)u\|_{L^2(M)}.
    \end{align*}
\end{lemma}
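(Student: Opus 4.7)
The plan is to use the spectral theorem for $-\Delta_g$ on the compact manifold $M$. Expanding $u = \sum_{k} \widehat{u}(k)\psi_k$ in the orthonormal eigenbasis $(\psi_k)$, Parseval's identity gives
\begin{equation*}
   \|u\|_{L^2(M)}^2 = \sum_{k} |\widehat{u}(k)|^2, \qquad \|(-\Delta_g+z)u\|_{L^2(M)}^2 = \sum_{k} |\lambda_k+z|^2\, |\widehat{u}(k)|^2,
\end{equation*}
so the estimate reduces to the purely numerical claim that
\begin{equation*}
   |\lambda+z| \geq |z|^{1/2}\,\re\sqrt{z} \qquad \text{for every } \lambda\geq 0.
\end{equation*}

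To prove this pointwise inequality, I would split on the sign of $\re z$. Writing $|\lambda+z|^2 = (\lambda+\re z)^2 + (\im z)^2$, if $\re z \geq 0$ the right-hand side is minimized at $\lambda=0$ and one reads off $|\lambda+z|\geq |z| = |\sqrt{z}|^2 \geq |\sqrt{z}|\,\re\sqrt{z}$. If $\re z < 0$, bound $|\lambda+z|^2 \geq (\im z)^2$. Setting $w=\sqrt{z}=a+ib$ with $a=\re\sqrt{z}>0$, we have $\im z = 2ab$, $|z|=a^2+b^2$, and the hypothesis $\re z<0$ gives $b^2>a^2$; then $(\im z)^2 = 4a^2b^2 \geq a^2(a^2+b^2) = |z|(\re\sqrt{z})^2$ because $3b^2 > a^2$. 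This dispatches both cases.

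The rest of the argument is assembly: combining the pointwise bound with the series identities above yields $\sum_k |\widehat{u}(k)|^2 \leq |z|^{-1}(\re\sqrt{z})^{-2}\sum_k|\lambda_k+z|^2|\widehat{u}(k)|^2$, and taking square roots gives the asserted estimate. No genuine obstacle arises here; the only content is the elementary numerical inequality above, and the main thing to check carefully is the case $\re z < 0$, where one must use that $\sqrt{z}$ denotes the principal branch (so $\re\sqrt{z}>0$ on $\mathbf{C}\setminus\mathbf{R}_-$) in order to produce the required $b^2>a^2$.
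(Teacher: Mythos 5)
Your proof is correct, but the numerical inequality is established by a different device than in the paper. The paper factors
\begin{equation*}
   \lambda_k+z = \big(\sqrt{\lambda_k}+i\sqrt{z}\big)\big(\sqrt{\lambda_k}-i\sqrt{z}\big),
\end{equation*}
and observes that (depending on the sign of $\im\sqrt{z}$) one of the two factors has modulus at least $(\lambda_k+|z|)^{1/2}$ and the other at least $\re\sqrt{z}$, yielding the slightly stronger pointwise bound $|\lambda_k+z|\geq(\lambda_k+|z|)^{1/2}\re\sqrt{z}$ in a single stroke. You instead split on the sign of $\re z$: when $\re z\geq0$ you minimize $(\lambda+\re z)^2+(\im z)^2$ over $\lambda\geq0$ at $\lambda=0$ and use $|z|\geq|\sqrt z|\re\sqrt z$; when $\re z<0$ you drop $(\lambda+\re z)^2$ entirely and check $(\im z)^2\geq|z|(\re\sqrt z)^2$ algebraically via $b^2>a^2\Rightarrow3b^2>a^2$. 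Both routes are sound, and both finish identically via Parseval. The factorization is tidier, needs no cases, and its extra $\lambda_k$ in the lower bound can matter in situations where one wants decay in $\lambda_k$ as well as in $|z|$; your bound discards that gain but is exactly what the stated lemma requires, and the case analysis is arguably more self-explanatory. No gap.
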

\begin{proof}
     We start with the factorization
          $$ \lambda_k + z = \big(\sqrt{\lambda_k}+i\sqrt{z}\big)\big(\sqrt{\lambda_k}-i\sqrt{z}\big) $$
     which implies 
          $$ |\lambda_k+z| \ge \big(\lambda_k+|z|\big)^{\frac{1}{2}} \re \sqrt{z}.  $$ 
     If $f=(-\Delta_g+z)u$, using Fourier series, we get
      \begin{align*}
           \|u\|^2_{L^2(M)} = \sum_{k=0}^{\infty} \frac{|\widehat{f}(k)|^2}{|z+\lambda_k|^2}  
           \lesssim |z|^{-1} \big|\re \sqrt{z}\big|^{-2} \|f\|_{L^2(M)}^2. 
      \end{align*}
      This completes the proof of the lemma.
\end{proof}
\begin{rem}
     When $\re \sqrt{z} \geq \delta$, we deduce the following improved $L^2$ bound
      \begin{align}
      \label{LpEst:ImprovedL2}
         \|u\|_{L^2(M)} \lesssim |z|^{-\frac{1}{2}}  \|(-\Delta_g+z)u\|_{L^2(M)}.
    \end{align}
    which will prove to be of crucial importance to estimate the remainder terms in the proof of Theorem \ref{Intro:MainThm}.
    Unfortunately, such an improved bound only holds when the spectral parameter is outside of a parabola. Indeed suppose
    that the estimate \eqref{LpEst:ImprovedL2} holds then testing it with unit eigenfunctions $\psi_k$ implies
         $$ |\lambda_k+z| \geq c|z|^{\frac{1}{2}} $$
    and the parameter $z$ has to remain outside of the envelope of the curves $|\lambda_k+z| = c|z|^{\frac{1}{2}}$, $k \in \N$.
    In particular, if we choose $z=-\lambda_k+iy$ then we have
         $$ y^4 \geq c^4(\lambda_k^2+y^2),  $$
    and $|y| \geq \sqrt{c \lambda_k}$.
         
    Besides, if one is interested in proving resolvent estimates when $z$ lies in $\widetilde{\Xi}_{\delta}$,
    then difficulties arise when the parameter $z$ lies on the lines $\im z = \pm \delta$.
    This means that the square root of $z$ lies on the part of hyperbola    
         $$ \sqrt{z} = \lambda^{-1} + i \frac{\delta}{2} \lambda, \quad \lambda \geq \sqrt{\frac{2}{\delta}} $$
    (see figure \ref{fig:SqrtAdm}) and this implies
         $$  |z|^{-\frac{1}{2}} \big|\re \sqrt{z}\big|^{-1} \simeq 1. $$
     In other words, the $L^2$ estimate is weak 
     \begin{align*}
         \|u\|_{L^2(M)} \lesssim  \|(-\Delta_g+z)u\|_{L^2(M)}
    \end{align*}
    in terms of the decay with respect to the size of the spectral parameter $|z|\simeq \lambda^2$. 
\end{rem}
\begin{center}
\begin{figure}[h]
    \input{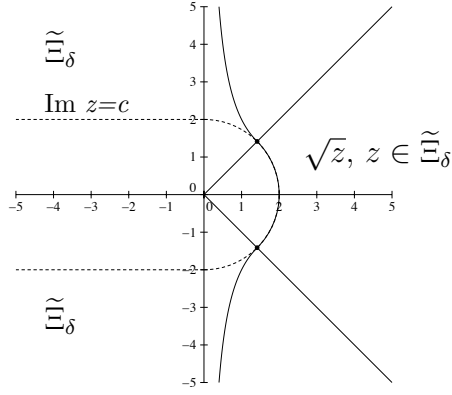}
     \caption{Values of the spectral parameters $z \in \widetilde{\Xi}_{\delta}$ and $\sqrt{z}$}
     \label{fig:SqrtAdm}
\end{figure}
\end{center}
\subsubsection{$L^{\frac{2n}{n+2}}-L^{\frac{2n}{n-2}}$ estimate}
We now prove the $L^{\frac{2n}{n+2}}-L^{\frac{2n}{n-2}}$ estimate of Theorem \ref{Intro:MainThm}.
We start from 
\begin{align*}
      u = T_{\rm Had}(z)(-\Delta_g+z)u + v
\end{align*}
where 
     $$ v= \big(\id-T_{\rm Had}(z)(-\Delta_g+z)\big)u. $$
Note that by \eqref{LpEst:ParamIdentity} we have
\begin{align}
\label{LpEst:RemainderIdentity}
     (-\Delta_g+z)v =-S(z)(-\Delta_g+z)u.
\end{align}
We start our estimations
\begin{align*}
     \|u\|_{L^{\frac{2n}{n-2}}(M)} \leq \| T_{\rm Had}(z)(-\Delta_g+z)u \|_{L^{\frac{2n}{n-2}}(M)} + \|v\|_{L^{\frac{2n}{n-2}}(M)} 
\end{align*}
and by the boundedness properties of the Hadamard parametrix given in Theorem \ref{LpEst:HadBdness}, we have
\begin{align*}
     \|u\|_{L^{\frac{2n}{n-2}}(M)} \leq \| (-\Delta_g+z)u \|_{L^{\frac{2n}{n+2}}(M)} + \|v\|_{L^{\frac{2n}{n-2}}(M)}. 
\end{align*}
To estimate the second right hand-side term, we need the following lemma.
\begin{lemma}
\label{LpEst:LpL2EstLem}
      Let $(M,g)$ be a compact Riemannian manifold (without boundary) of dimension $n \geq 3$, and let $\delta$ be a positive number.
      For all $z\in \Xi_{\delta}$  we have 
     \begin{align}
     \label{LpEst:RemainderEst}
           \|v\|_{L^{\frac{2n}{n-2}}(M)} \leq C |z|^{-\frac{1}{4}} \|(-\Delta_g+z)v\|_{L^2(M)}.
     \end{align}
\end{lemma}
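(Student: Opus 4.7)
The natural strategy is to convert the right parametrix identity \eqref{LpEst:ParamIdentity} into a left parametrix and apply it to $v$. Since $-\Delta_g$ is formally self-adjoint, transposing \eqref{LpEst:ParamIdentity} yields
\begin{align*}
    {}^tT_{\rm Had}(z)(-\Delta_g+z) = \id + {}^tS(z).
\end{align*}
As emphasized at the end of the parametrix construction in Section \ref{Sec:Had}, the kernel of ${}^tT_{\rm Had}(z)$ has exactly the same structure as that of $T_{\rm Had}(z)$ (only the smooth coefficients $\alpha_\nu$ are reordered), so ${}^tT_{\rm Had}(z)$ and ${}^tS(z)$ enjoy the same $L^p\to L^q$ boundedness properties as $T_{\rm Had}(z)$ and $S(z)$. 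Applying the transposed identity to $v$ gives
\begin{align*}
    v = {}^tT_{\rm Had}(z)(-\Delta_g+z)v - {}^tS(z)v,
\end{align*}
which I plan to estimate term by term.

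For the parametrix piece, the second inequality in \eqref{LpEst:CrucialEst}, applied to ${}^tT_{\rm Had}(z)$ after dualizing (or directly, since the transpose enjoys the same bounds), gives
\begin{align*}
    \bigl\| {}^tT_{\rm Had}(z)(-\Delta_g+z)v \bigr\|_{L^{2n/(n-2)}(M)} \lesssim |z|^{-1/4} \|(-\Delta_g+z)v\|_{L^2(M)}.
\end{align*}
For the remainder piece, I invoke Lemma \ref{LpEst:RemainderEstLem} by duality: taking $p = 2n/(n+2)$ and $q=2$ places us in the third region of the lemma (one checks that $1/q = 1/2 > \frac{n+1}{n-1}\cdot \frac{1}{p'}$ for $n\ge 3$), so $\|S(z)\|_{L^{2n/(n+2)}\to L^2} \lesssim |z|^{1/4}$, and by duality
\begin{align*}
    \bigl\| {}^tS(z) v \bigr\|_{L^{2n/(n-2)}(M)} \lesssim |z|^{1/4} \|v\|_{L^2(M)}.
\end{align*}

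The main obstacle is that this last bound grows in $|z|$, so by itself it is useless. The saving must come from the improved $L^2$ resolvent estimate \eqref{LpEst:ImprovedL2}, which is valid precisely when $\re\sqrt z \ge \delta$, i.e.\ when $z\in\Xi_\delta$; this is the unique place where the assumption $z\in\Xi_\delta$ enters the proof. Using it we get
\begin{align*}
    \|v\|_{L^2(M)} \lesssim |z|^{-1/2} \|(-\Delta_g+z)v\|_{L^2(M)},
\end{align*}
so that $\|{}^tS(z)v\|_{L^{2n/(n-2)}} \lesssim |z|^{1/4}\cdot|z|^{-1/2}\|(-\Delta_g+z)v\|_{L^2} = |z|^{-1/4}\|(-\Delta_g+z)v\|_{L^2}$. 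Adding the two contributions produces \eqref{LpEst:RemainderEst} and completes the proof.
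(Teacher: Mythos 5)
Your proposal is correct and follows essentially the same route as the paper: convert the right parametrix identity into a left one (the paper writes $v = T_{\rm Had}(\bar z)^*(-\Delta_g+z)v - S(\bar z)^*v$ using the adjoint rather than the transpose, but these have the same mapping properties), bound the parametrix piece by \eqref{LpEst:CrucialEst}, bound the remainder piece by Lemma \ref{LpEst:RemainderEstLem}, and absorb the resulting $|z|^{1/4}\|v\|_{L^2}$ term using the improved $L^2$ bound from Lemma \ref{LpEst:L2ResEst}, which is exactly where $z\in\Xi_\delta$ is used. Your exponent check placing $(p,q)=(2n/(n+2),2)$ in the third region of Lemma \ref{LpEst:RemainderEstLem} and the resulting $|z|^{1/4}$ bound are also correct.
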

\begin{proof}
      We use the adjoint of Hadamard's parametrix as a left parametrix
           $$ v = T_{\rm Had}(\bar{z})^*(-\Delta_g+z)v-S(\bar{z})^* v$$
      and get  the following estimate by the boundedness properties of the Hadamard parametrix given in Theorem \ref{LpEst:HadBdnessBis}
      --- more precisely \eqref{LpEst:CrucialEst} ---, and of the remainder given in Lemma \ref{LpEst:RemainderEstLem} 
      \begin{align*}
            \|v\|_{L^{\frac{2n}{n-2}}(M)} \lesssim |z|^{-\frac{1}{4}} \|(-\Delta_g+z)v\|_{L^2(M)} 
            + |z|^{\frac{1}{4}}  \|v\|_{L^2(M)}.
      \end{align*}    
      The proof of \eqref{LpEst:RemainderEst} is completed once one has controlled the second right-hand side term using the bound 
      obtained in Lemma \ref{LpEst:L2ResEst}
           $$ \|v\|_{L^2(M)} \lesssim  |z|^{-\frac{1}{2}} \|(-\Delta_g+z)v\|_{L^2(M)}. $$
      This ends the proof of the lemma.
\end{proof}
Using the previous lemma and the identity \eqref{LpEst:RemainderIdentity}, we get
\begin{align*}
     \|u\|_{L^{\frac{2n}{n-2}}(M)} &\lesssim \|(-\Delta_g+z)u \|_{L^{\frac{2n}{n+2}}(M)} 
     + |z|^{-\frac{1}{4}} \|S(z)(-\Delta_g+z)u\|_{L^2(M)}. 
\end{align*}
It is now a matter of using the boundedness properties of the remainder $S(z)$ given in Lemma \ref{LpEst:RemainderEstLem} to conclude that
\begin{align*}
     \|u\|_{L^{\frac{2n}{n-2}}(M)} &\lesssim \|(-\Delta_g+z)u \|_{L^{\frac{2n}{n+2}}(M)}.
\end{align*}
This completes the proof of Theorem  \ref{Intro:MainThm}.
\end{section}
%
%
\begin{section}{Relation with $L^p$ Carleman estimates}
The aim of this section is to illustrate the relations between resolvent estimates and analogues of the $L^p$ Carleman estimates obtained in the Euclidean case
by Jerison and Kenig \cite{JK} (for logarithmic weights) or by Kenig, Ruiz and Sogge \cite{KRS} (for linear weights). The proof of $L^2$ Carleman 
estimates for limiting Carleman weights in \cite{DKSaU}  is based on integration by parts and can not be used in the $L^p$ setting. 
However in \cite{DKSa}, we were able to prove such estimates following an idea of Jerison \cite{Jer}, (see also \cite[Section 5.1]{Sog}) using spectral
cluster estimates of Sogge \cite{Sog}. Here we present an alternative argument following Kenig, Ruiz and Sogge \cite{KRS} and Shen \cite{Shen} and reducing the Carleman estimate
to resolvent estimates. 

Limiting Carleman weights on an open Riemannian manifold $(N,g)$ are smooth functions $\phi$ such that $\dd \phi \neq 0$ on $N$ and 
\begin{align}
\label{Carl:BracketCond}
     \big\{\overline{p_{\phi}},p_{\phi}\big\}(x,\xi) = 0, \quad (x,\xi) \in p_{\phi}^{-1}(0) \subset T^*N
\end{align}
if $p_{\phi}$ denotes the following symbol
     $$ p_{\phi}(x,\xi)=|\xi|_g^2-|\dd \phi|^2_g+2i\langle \dd \phi,\xi\rangle_g. $$
This a conformally invariant property. This notion was introduced in \cite{KSU} which dealt with Calder\'on's inverse conductivity problem 
with partial data in dimension $n\geq 3$; this article was concerned with the isotropic setting and took advantage of replacing classical
linear weights by logarithmic weights (which are examples of limiting Carleman weights in the Euclidean setting). 
The motivation for introducing such weights in the context of general Riemannian manifolds lies in the application to inverse problems \cite{KSU,DKSaU,DKSa}
where solutions to the Schr\"odinger equation  with opposite exponential behaviours are constructed on manifolds using complex geometrical optics. 

It was proved in \cite{DKSaU,LS} that the condition \eqref{Carl:BracketCond} is in fact stringent. Simply connected Riemannian manifolds which admit
limiting Carleman weights are those which are conformal to one having a parallel field (i.e. simultaneously a Killing and a gradient field). In particular, such manifolds
are locally conformal to the product of an Euclidean interval and another open Riemannian manifold. The results in \cite{DKSaU,DKSa} are concerned with 
the simpler situation of manifolds conformal to global products of the Euclidean line with another Riemannian manifold. This is the situation in which
we choose to set ourselves to underline the connection between resolvent and Carleman estimates.

The setting is the product $N=\R \times M_{0}$ of the Euclidean (one dimensional) line and of an $(n-1)$-dimensional compact
Riemannian manifold  $(M_{0},g_{0})$ \textit{without boundary}, endowed with the product metric 
   $$ g=\dd x_{1}^2 + g_{0}. $$
We consider the Laplace-Beltrami operator on $N$
\begin{align*}
     P = \Delta_{g}=\d_{x_{1}}^2+\Delta_{g_{0}}.
\end{align*}
A natural limiting Carleman weight on this product manifold is $x_{1}$ and the corresponding conjugated operator reads
\begin{align*}
     \e^{\tau x_{1}}P\e^{-\tau x_{1}} = \d_{x_{1}}^2 - 2 \tau \d_{x_{1}} + \tau^2 + \Delta_{g_{0}}.
\end{align*}
\begin{thm}
\label{Carl:LpCarlThm}
    Let $N=\R \times M_{0}$ be the product of the Euclidean line and of an $(n-1)$-dimensional compact manifold $(M_0,g_0)$ endowed
    with the product metric $g=\dd x_{1}^2 + g_{0}$. For all compact intervals $I \subset \R$, there exist two constants $C,\tau_0>0$ 
    such that for all $\tau \in \R$ with $|\tau| \geq \tau_0$, we have
    \begin{align}
         \| \e^{\tau x_{1}} u\|_{L^{\frac{2n}{n-2}}(N)} \leq C \| \e^{\tau x_{1}} Pu\|_{L^{\frac{2n}{n+2}}(N)} 
    \end{align}
    when $u \in \mathcal{C}^{\infty}_0(N)$ is supported in $\mathring{I} \times M_0$.
\end{thm}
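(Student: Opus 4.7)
The plan is to reduce this Carleman estimate on the product $N=\R\times M_0$ to the $L^p$ resolvent estimate of Theorem \ref{Intro:MainThm} applied to the $(n-1)$-dimensional compact manifold $M_0$, following the strategy of Kenig--Ruiz--Sogge \cite{KRS} and Shen \cite{Shen}. Setting $v=e^{\tau x_1}u$ (compactly supported in $\mathring I\times M_0$), the estimate becomes
$$\|v\|_{L^{\frac{2n}{n-2}}(N)}\leq C\|\mathcal P_\tau v\|_{L^{\frac{2n}{n+2}}(N)},\qquad \mathcal P_\tau=e^{\tau x_1}Pe^{-\tau x_1}=\d_{x_1}^2-2\tau\d_{x_1}+\tau^2+\Delta_{g_0}.$$
Taking the partial Fourier transform in the flat direction $x_1$ conjugates $\mathcal P_\tau$ into a fiberwise resolvent on $M_0$: a direct calculation gives
$$\widehat{\mathcal P_\tau v}(\xi_1,x')=-\bigl(-\Delta_{g_0}+z(\xi_1)\bigr)\hat v(\xi_1,x'),\qquad z(\xi_1):=(\xi_1+i\tau)^2,$$
so on each slice $\hat v(\xi_1,\cdot)=-R_{M_0}(z(\xi_1))\,\widehat{\mathcal P_\tau v}(\xi_1,\cdot)$ whenever $z(\xi_1)$ lies in the resolvent set of $-\Delta_{g_0}$.

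The principal branch yields $\re\sqrt{z(\xi_1)}=|\xi_1|$, so $z(\xi_1)\in\Xi_\delta(M_0)$ precisely when $|\xi_1|\geq\delta$, and in that regime Theorem \ref{Intro:MainThm} applied to $M_0$ gives, uniformly in $\xi_1$ and $\tau$,
$$\|\hat v(\xi_1,\cdot)\|_{L^{\frac{2(n-1)}{n-3}}(M_0)}\leq C\|\widehat{\mathcal P_\tau v}(\xi_1,\cdot)\|_{L^{\frac{2(n-1)}{n+1}}(M_0)}.$$
Since $M_0$ is compact and one checks $\tfrac{2n}{n-2}<\tfrac{2(n-1)}{n-3}$ and $\tfrac{2n}{n+2}>\tfrac{2(n-1)}{n+1}$, the continuous Lebesgue embeddings $L^{\frac{2(n-1)}{n-3}}(M_0)\subset L^{\frac{2n}{n-2}}(M_0)$ and $L^{\frac{2n}{n+2}}(M_0)\subset L^{\frac{2(n-1)}{n+1}}(M_0)$ upgrade the fiberwise inequality to the Carleman exponents on $M_0$. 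These slicewise bounds are patched into a global estimate on $N$ by Hausdorff--Young on $\R$ in the $x_1$-variable (applicable since $\tfrac{2n}{n-2}\geq 2$) followed by Minkowski's integral inequality, yielding
$$\|v\|_{L^{\frac{2n}{n-2}}(N)}\leq \|\hat v\|_{L^{\frac{2n}{n+2}}_{\xi_1}L^{\frac{2n}{n-2}}_{x'}}\leq C\|\widehat{\mathcal P_\tau v}\|_{L^{\frac{2n}{n+2}}_{\xi_1}L^{\frac{2n}{n+2}}_{x'}},$$
after which the compact support of $v$ in $\mathring I$ is used (via H\"older on the fixed interval $I$) to pass back to $\|\mathcal P_\tau v\|_{L^{\frac{2n}{n+2}}(N)}$.

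The main obstacle is the low-frequency regime $|\xi_1|<\delta$, where $z(\xi_1)$ leaves $\Xi_\delta$ and can lie close to the point spectrum $\{-\lambda_k\}$ of $-\Delta_{g_0}$ (for example $z(0)=-\tau^2$ is on $\R_-$ and may approach some $-\lambda_k$ as $\tau$ grows). One handles this by exploiting that $v\in\mathcal C_0^{\infty}(\mathring I\times M_0)$ is compactly supported in $x_1$: by Paley--Wiener, $\hat v(\xi_1,\cdot)$ and $\widehat{\mathcal P_\tau v}(\xi_1,\cdot)$ extend holomorphically to $\xi_1\in\C$, and the Fourier inversion contour may be deformed into the complex plane so that $z$ remains in $\Xi_\delta$ along the new contour, with $\tau_0$ chosen large enough that the deformation avoids the finitely many values of $\tau$ equal to some $\sqrt{\lambda_k}$. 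A secondary technical point is that the Fourier transform is not bounded $L^p(\R)\to L^p(\R)$ for $p=\tfrac{2n}{n+2}<2$, so the step back from mixed norms to the ambient $L^{\frac{2n}{n+2}}(N)$ must again invoke the fixed compact support $\mathring I$ in $x_1$ rather than a Hausdorff--Young bound.
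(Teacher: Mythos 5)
Your proposal takes a genuinely different route from the paper's --- fiberwise resolvent estimates on the $(n-1)$-dimensional factor $M_0$ patched via Hausdorff--Young in $x_1$ --- and this route has gaps that I do not see how to close. The most immediate one is dimensional: you invoke Theorem \ref{Intro:MainThm} on $M_0$, which has dimension $n-1$, but that theorem requires dimension $\geq 3$, so your argument is void for $n=3$ (and indeed the exponent $2(n-1)/(n-3)$ is meaningless there). The second, and more serious, gap is the return from the frequency side. You need an inequality of the form $\|\widehat{w}\|_{L^{p}_{\xi_1}} \lesssim \|w\|_{L^{p}_{x_1}}$ with $p = \tfrac{2n}{n+2} < 2$, uniformly for $w$ supported in the fixed interval $I$; you flag this as a ``secondary technical point'' to be repaired by H\"older on $I$, but the inequality is simply false. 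Taking $w_N = \chi D_N$ with $\chi$ a bump on $I$ and $D_N$ the Dirichlet kernel gives $\|\widehat{w}_N\|_{L^p(\R)} \simeq N^{1/p}$ while $\|w_N\|_{L^p(I)} \simeq N^{1-1/p}$, so the ratio grows like $N^{2/p-1} \to \infty$. Compact support prevents the reverse Hausdorff--Young inequality from holding for $p<2$, so this step cannot be repaired by H\"older. Finally, the contour deformation you propose for $|\xi_1|<\delta$ is only sketched; deforming the inversion contour produces exponential factors $\e^{c x_1}$ against which your $L^p$ estimates must be re-derived, and it is not apparent how to do this uniformly in $\tau$.

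The paper avoids all of these issues by \emph{not} freezing $\xi_1$ and applying a lower-dimensional resolvent estimate. Instead it periodizes to the $n$-dimensional compact manifold $M = \mathbf{T}\times M_0$ and applies Theorem \ref{Intro:MainThm} in dimension $n$. Two further devices make this work. First, conjugation by $\e^{-ix_1/2}$ (H\"ahner's trick) replaces $D_{x_1}$ by $D_{x_1}+\tfrac12$, so the conjugated symbol is uniformly invertible for $|\tau|\geq 1$ --- this supplants your contour-deformation idea and handles all frequencies including $\xi_1=0$. Second, a Littlewood--Paley decomposition in $x_1$ is combined with an error estimate comparing the exact inverse $G_\tau$ to the resolvent $R(-\tau^2 + i(2^{\nu}+1)\tau)$ on each dyadic block; the error term is bounded via Sogge's spectral cluster estimates \eqref{Carl:spclusterEst} by an integral in $m$ that sums uniformly in $\nu$ and $\tau$. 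This reduces \eqref{Carl:LocEst} to a single application of the $n$-dimensional resolvent estimate at $z = -\tau^2 + i\varrho\tau$, which lies in $\Xi_\delta$ for $|\tau|$ large. If you wish to pursue a fiberwise reduction in spirit, you would at the very least need to replace the Hausdorff--Young patching by a genuinely two-sided device (e.g.\ Littlewood--Paley in $x_1$ as the paper does), and account for the dimension drop.
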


This Carleman estimate was obtained in \cite{DKSa}, and in \cite{Shen} when $M_0=\mathbf{T}^{n-1}$ is the $n-1$ dimensional torus, 
by a different method, which uses as well the spectral cluster estimates of Sogge but on $n-1$ dimensional manifolds.
By translating and scaling the estimate in the first variable, it is always possible to assume $I=[0,2\pi]$ without loss of generality,
which we do since this will lighten our notations.  The proof follows \cite{KRS} and is based on the construction of an inverse 
operator of the conjugated operator \eqref{Carl:ConjOp}; to ease such an inversion, inspired by
\cite{Hahner} (see also \cite[Section 3.2]{Salo}), we further conjugate the operator by a harmless oscillating factor
 \begin{align}
\label{Carl:ConjOp}
     -\e^{\tau x_{1}-\frac{i}{2}x_1}P\e^{-\tau x_{1}+\frac{i}{2}x_1} = \bigg(D_{x_{1}}+\frac{1}{2}\bigg)^2 + 2i \tau \bigg(D_{x_{1}}+\frac{1}{2}\bigg) 
     - \tau^2 - \Delta_{g_{0}}.
\end{align} 
We regard functions $u \in \D((0,2\pi) \times M_0)$ as smooth periodic functions in the first variable.
This corresponds to a change of perspective by replacing the manifold $N=\R \times M_0$ by the product 
$M=\mathbf{T} \times M_0$ of the one dimensional torus  $\mathbf{T}=\R/2\pi\Z$ and of the manifold $(M_0,g_0)$. 
We denote by $\lambda_{0}=0<\lambda_{1} \leq \lambda_{2} \leq \dots$ the sequence of
eigenvalues of $-\Delta_{g_{0}}$ on $M_{0}$ and $(\psi_{k})_{k \geq 0}$ the corresponding sequence of eigenfunctions
forming an orthonormal basis of $L^2(M_0)$
\begin{align*}
     -\Delta_{g_{0}}\psi_{k} = \lambda_{k} \psi_{k}.
\end{align*}
We denote by $\pi_{k} : L^2(M_{0}) \to L^2(M_{0})$ the projection on the linear space spanned by the eigenfunction $\psi_{k}$ so that 
\begin{align*}
     \sum_{k=0}^{\infty} \pi_{k} = {\rm Id}, \quad \sum_{k=0}^{\infty} \lambda_{k} \pi_{k} = -\Delta_{g_{0}}.
\end{align*}
The eigenvalues of the Laplacian $\Delta_{g}$ on $\mathbf{T} \times M_0$ are $-(j^2+\lambda_{k})$ 
with $j \in \Z$, $k \in \N$ and the corresponding eigenfunctions are%
\footnote{Here we are using in an essential manner the fact that the metric $g$ is a product, i.e. the fact that $x_{1}$ is a limiting Carleman weight.} 
    $$ \e^{i  j x_{1}} \psi_{k}. $$ 
We denote by $\pi_{j,k} : L^2(M) \to L^2(M)$ the projection on the linear space spanned by the eigenfunction $\e^{i j x_{1}} \psi_{k}$:
    $$ \pi_{j,k}f(x) = \frac{1}{2\pi}\bigg(\int_{0}^{2\pi} \e^{-i j y_{1}} \pi_{k}f(y_{1},x') \, \dd y_{1}\bigg) \, \e^{i j  x_{1}}, $$
and define the spectral clusters as
\begin{align*}
     \chi_{m} = \sum_{m \leq \sqrt{j^2+\lambda_{k}} < m+1} \pi_{j,k}, \quad m \in \N.
\end{align*}
Note that these are projectors $\chi_{m}^2=\chi_{m}$.
We end this paragraph by recalling the spectral cluster estimates of Sogge \cite{Sog,Sog1} that we will need:
\begin{align}
\label{Carl:spclusterEst}
\begin{split}
     \|\chi_{m}u\|_{L^{\frac{2n}{n-2}}(M)} &\leq C(1+m)^{\frac{1}{2}} \|u\|_{L^{2}(M)} \\ 
     \|\chi_{m}u\|_{L^{2}(M)} &\leq C(1+m)^{\frac{1}{2}} \|u\|_{L^{\frac{2n}{n+2}}(M)}.
\end{split}
\end{align}
The two inequalities are dual to each other and can be found in \cite[Corollary 5.1.2]{Sog}. 

We are now ready to reduce the proof of Carleman estimates to resolvent estimates. Recall that our goal is to prove 
\begin{align}
\label{Carl:ConjEst}
     \|u\|_{L^{\frac{2n}{n-2}}(M)} \leq C \|f\|_{L^{\frac{2n}{n+2}}(M)}
\end{align}
when $u \in \mathcal{C}^{\infty}_0((-\pi,\pi)\times M_0)$ and
\begin{align}
\label{Carl:ConjEq}
    \bigg(D_{x_{1}}+\frac{1}{2} \bigg)^2u+2i\tau \bigg(D_{x_{1}}+\frac{1}{2} \bigg)u-\Delta_{g_{0}}u-\tau^2 u = f.
\end{align}
The equation \eqref{Carl:ConjEq} is actually easy to solve: writing $f=\sum_{j,k} \pi_{j,k}f$ and similarly for $u$, the equation formally becomes
\begin{align*}
     \bigg(\bigg(j+\frac{1}{2}\bigg)^2 + 2i\tau \bigg(j+\frac{1}{2}\bigg) - \tau^2 + \lambda_k\bigg)\pi_{j,k}u=\pi_{j,k}f
\end{align*}
for $j \in \Z$ and for $k \in \N$. The symbol on the left is always nonzero provided that $|\tau| \geq 1$. Thus, the inverse operator may be written as  
\begin{equation*}
     G_{\tau} f =  \sum_{j=-\infty}^{\infty} \sum_{k=0}^{\infty} \frac{\pi_{j,k}f}{\big(j+\frac{1}{2}\big)^2+2i\big(j+\frac{1}{2}\big) \tau+\lambda_{k}-\tau^2}.
\end{equation*}

We now use Littlewood-Paley theory to localize in frequency with respect to the Euclidean variable $x_{1}$; we have
\begin{align*}  
    u = \sum_{\nu=0}^{\infty} u_{\nu}, \quad f = \sum_{\nu=0}^{\infty} f_{\nu}
\end{align*}
with
\begin{align*}
      u_{0}&=\frac{1}{2\pi}\bigg(\int_{0}^{2\pi}  u(y_{1},x') \, \dd y_{1}\bigg), \\
      u_{\nu}&=\sum_{2^{\nu-1} \leq |j| < 2^{\nu}} \frac{1}{2\pi} \bigg(\int_{0}^{2\pi} \e^{-i j  y_{1}} u(y_{1},x') \, \dd y_{1}\bigg) \, 
      \e^{i j x_{1}}, \quad \nu>0
 \end{align*}
and similarly for $f$. It suffices to prove
\begin{align}
\label{Carl:LocEst}
     \|u_{\nu}\|_{L^{\frac{2n}{n-2}}} \leq C \|f_{\nu}\|_{L^{\frac{2n}{n+2}}}
\end{align}
since then by summing the former inequalities and using Littlewood-Paley's theory
\begin{align*}
    \|u\|^2_{L^{\frac{2n}{n-2}}} \lesssim \sum_{\nu=0}^{\infty} \|u_{\nu}\|^2_{L^{\frac{2n}{n-2}}} 
    \lesssim \sum_{\nu=0}^{\infty} \|f_{\nu}\|^2_{L^{\frac{2n}{n-2}}}  \lesssim  \|f\|^2_{L^{\frac{2n}{n+2}}}
\end{align*}
and this is the  inequality \eqref{Carl:ConjEst} we were looking for.

We now concentrate on the microlocalized estimate \eqref{Carl:LocEst}. Because the conjugated operator \eqref{Carl:ConjOp} and the localization
in frequency $1_{[2^{\nu-1},2^{\nu})}(D_{x_1})$ commute%
\footnote{Here again we are using the fact that the weight $x_{1}$ is a limiting Carleman weight, i.e. that the metric $g_{0}$ does not depend on $x_{1}$.},
we have
    $$ u_{\nu} = G_{\tau}f_{\nu}. $$
We denote
\begin{align*}
      R(z) = \bigg(\bigg(D_{x_{1}}+\frac{1}{2}\bigg)^2 - \Delta_{g_{0}} + z\bigg)^{-1}
\end{align*}    
the resolvent associated with the elliptic operator $\big(D_{x_{1}}+\frac{1}{2}\big)^2 - \Delta_{g_{0}}$. In the case $\nu=0$, we have 
\begin{align}
\label{Carl:lowfrequency}
      u_0 = G_{\tau}f_0 = R(-\tau^2+i\tau) f_0
\end{align}
so \eqref{Carl:LocEst} is a resolvent estimate with $z=-\tau^2+i\tau$. When $\nu \geq 1$, we want to evaluate on microlocalized
functions the error made by freezing the operator $2 \tau D_{x_{1}}$ into $2^{\nu}\tau$ in \eqref{Carl:ConjOp}. This amounts to replacing $G_{\tau}$ 
by the resolvent $R(-\tau^2+i(2^{\nu}+1)\tau)$. More precisely, we want to estimate the term
\begin{align}
     \big(R(-\tau^2+i (2^{\nu}+1)\tau) - G_{\tau}\big)f_{\nu} .
\end{align}
This can be explicitly computed           
\begin{align*}
     \big(R(-\tau^2+i(2^{\nu}+1)\tau) -G_{\tau}\big)  f_{\nu} =   
     \sum_{j=-\infty}^{\infty} \sum_{k=0}^{\infty} a_{jk}^{\nu}(\tau) \,  \pi_{j,k}f_{\nu} 
\end{align*}
where
\begin{multline*}
    a_{jk}^{\nu}(\tau)  =\\   \frac{i\tau  (2^{\nu}-2j)1_{[2^{\nu-1},2^{\nu})}(j)}
    {\Big(\big(j+\frac{1}{2}\big)^2+2i\tau \big(j+\frac{1}{2}\big) -\tau^2 + \lambda_{k}\Big)
    \Big(\big(j+\frac{1}{2}\big)^2+i(2^{\nu}+1)\tau -\tau^2 + \lambda_{k}\Big)}.
\end{multline*}

Using the spectral cluster estimates \eqref{Carl:spclusterEst} and the fact that spectral clusters are projectors, we have the following string of estimates
\begin{align*}
     \big\|\big(R(-\tau^2+i(2^{\nu}+1)\tau) - &G_{\tau}\big)f_{\nu}\big\|_{L^{\frac{2n}{n-2}}(M)} \\
     & \lesssim \sum_{m=0}^{\infty} (1+m)^{\frac{1}{2}} \big\|\chi_m\big(R(-\tau^2+i(2^{\nu}+1)\tau) - R_{\tau}\big) f_{\nu}\big\|_{L^{2}(M)} \\
     & \leq \sum_{m=0}^{\infty} (1+m)^{\frac{1}{2}} \sup_{m \leq \sqrt{j^2+\lambda_{k}} < m+1} \big|a^{\nu}_{jk}(\tau)\big| 
     \times  \|\chi_{m} f_{\nu}\|_{L^2(M)} 
\end{align*}
and using again \eqref{Carl:spclusterEst}, we get 
\begin{multline*}
     \big\|\big(R(-\tau^2+i(2^{\nu}+1)\tau) - G_{\tau}\big)f_{\nu}\big\|_{L^{\frac{2n}{n-2}}(M)}  \\
     \lesssim \bigg(\sum_{m=0}^{\infty} (1+m)  \sup_{m \leq \sqrt{j^2+\lambda_{k}} < m+1} \big|a^{\nu}_{jk}(\tau)\big| \bigg) 
     \times \|f_{\nu}\|_{L^{\frac{2n}{n+2}}(M)}.
\end{multline*}
We now prove that the above series converges and is uniformly bounded with respect to $\tau$ and $\nu$; we have
    $$ \sup_{m \leq \sqrt{j^2+\lambda_{k}} < m+1} \big|a^{\nu}_{jk}(\tau)\big| \lesssim  
         \frac{2^{\nu}|\tau|}{(m^2-\tau^2)^2+ 4^{\nu+1} \tau^2} $$
as well as
\begin{align*}
     \sum_{m=0}^{\infty} \frac{2^{\nu}|\tau| (1+m)}{(m^2-\tau^2)^2+ 4^{\nu+1} \tau^2} \lesssim
     \int_{0}^{\infty}  \frac{2^{\nu} |\tau| t}{(t^2-\tau^2)^2+ 4^{\nu+1} \tau^2} \, \dd t
\end{align*}
and if we perform the change of variables $s=4^{-\nu-1}\tau^{-2}(t^2-\tau^2)$ in the right-hand side integral, we obtain the bound
    $$  \sum_{m=0}^{\infty} \frac{2^{\nu}|\tau| (1+m)}{(m^2-\tau^2)^2+ 4^{\nu+1} \tau^2} \lesssim \int_{-\infty}^{\infty} \frac{\dd s}{s^2+1}. $$
Summing up our computations, we have the error estimate
\begin{align*}
     \big\|\big(R(-\tau^2+i(2^{\nu}+1)\tau) - G_{\tau}\big)f_{\nu}\big\|_{L^{\frac{2n}{n-2}}(M)}  
     \lesssim \|f_{\nu}\|_{L^{\frac{2n}{n+2}}(M)},
\end{align*}
this means that in order to obtain \eqref{Carl:LocEst}, it is enough to prove the resolvent estimate
\begin{align*}
     \big\|R(-\tau^2+i(2^{\nu}+1)\tau)f_{\nu}\big\|_{L^{\frac{2n}{n-2}}(M)}  
     \lesssim \|f_{\nu}\|_{L^{\frac{2n}{n+2}}(M)}
\end{align*}
with a constant which is uniform in $\tau$ and $\nu$.  

The conclusion of these computations is that Carleman estimates reduce to resolvent estimates of the form
\begin{align}
\label{Carl:ResolConj}
     \|u\|_{L^{\frac{2n}{n-2}}(M)} \lesssim  \bigg\| \bigg(D_1+\frac{1}{2}\bigg)^2-\Delta_{g_0}+z\bigg)u\bigg\|_{L^{\frac{2n}{n+2}}(M)}
\end{align}
with
\begin{align*}
     z = -\tau^2+i \varrho\tau \quad (\varrho \geq 1).
\end{align*}
After changing $u$ into $\e^{-\frac{i}{2}x_1}u$ in \eqref{Carl:ResolConj}, the resolvent estimate  is a consequence of Theorem \ref{Intro:MainThm}
since
          $$ \re \sqrt{z} = \sqrt{\frac{\re z +|z|}{2}} = \tau \sqrt{\frac{\sqrt{\tau^2+\varrho^2}-1}{2}} \geq 1 $$
provided $|\tau|$ is large. 
\end{section}
%
%

%
%

\begin{thebibliography}{10}

\bibitem{CS} 
L.~Carleson,  P.~Sj\"olin, \emph{{Oscillatory integrals and a multiplier problem for the disk}}, 
Studia Math. \textbf{44} (1971), 287--299.

\bibitem{DSF} 
D.~Dos Santos Ferreira, \textit{Strichartz estimates for non-selfadjoint operators}, 
Comm. Partial Differential Equations \textbf{29} (2004), 263--293.

\bibitem{DSF1} 
D.~Dos Santos Ferreira, \textit{Sharp $L^p$ Carleman estimates and unique continuation}, 
Duke Math. J., \textbf{129} (2005), 503-550.

\bibitem{DKSa}
D.~Dos Santos~Ferreira, C.~E. Kenig, and M.~Salo, \textit{Determining an unbounded potential from Cauchy data in admissible geometries}, 
preprint (2011), arXiv:1104.0232, 26 pages.

\bibitem{DKSaU}
D.~Dos Santos~Ferreira, C.~E. Kenig, M.~Salo, and G.~Uhlmann, \textit{Limiting Carleman weights and anisotropic inverse problems}, 
Invent. Math. \textbf{178} (2009), 119--171.
  
\bibitem{GHL} 
S.~Gallot, D.~Hulin, J.~Lafontaine, \textit{Riemannian geometry}, Springer Verlag, 1993.  

\bibitem{GelShi} 
I.~M.~Gelfand, G.~E.~Shilov, \textit{Generalized Functions I: Properties and Operations}, Academic Press, 1964.

\bibitem{Hahner}
P.~H\"ahner, \textit{A periodic Faddeev-type solution operator},
J. Differential Equations \textbf{128} (1996), 300--308. 
  
\bibitem{Hor}
L.~H\"ormander, \textit{The analysis of linear differential operators}, Springer Verlag, 1986.

\bibitem{Jer}
D.~Jerison, \textit{Carleman inequalities for the Dirac and Laplace operators and unique continuation}, 
Adv. Math. \textbf{63} (1986), 118--134.

\bibitem{JK}
D.~Jerison and C.~E.~Kenig, \textit{Unique continuation and absence of positive eigenvalues for Schr\"odinger operators}, 
Ann. of Math. \textbf{121} (1985), 463--494.

\bibitem{KRS}
C.~E.~Kenig, A.~Ruiz, and C.~D.~Sogge, \textit{Uniform Sobolev inequalities and unique continuation for second order constant coefficient differential operators},
Duke Math. J. \textbf{55} (1987), 329--347.

\bibitem{KSU} 
C.~E.~Kenig, J.~Sj\"ostrand, G.~Uhlmann,  \textit{The Calder\'on problem with partial data}, 
Ann. of Math., \textbf{165} (2007), 567--591.

\bibitem{KT}
H.~Koch and D.~Tataru, \textit{Dispersive estimates for principally normal pseudodifferential operators}, 
Comm. Pure Appl. Math. \textbf{58} (2005), 217--284.

\bibitem{LS}
T.~Liimatainen, M.~Salo, \textit{ Nowhere conformally homogeneous manifolds and limiting Carleman weights},
preprint (2010), arXiv:1011.2507, 8 pages.

\bibitem{Salo}
M.~Salo, \textit{Lecture notes on the Calder\'on problem}, 2008, 55 pages, available at \texttt{http://www.rni.helsinki.fi/\~{}msa/}

\bibitem{Shen}
Z.~Shen, \textit{On absolute continuity of the periodic Schr\"odinger operators}, 
Internat. Math. Res. Notices \textbf{1} 2001, 1--31. 

\bibitem{Sog}
C.~D.~Sogge, \textit{Concerning the $L^p$ norm of spectral clusters for second-order elliptic operators on compact manifolds}, 
J. Funct. Anal. \textbf{77} (1988), 123--138.

\bibitem{Sog1}
C.~D.~Sogge, \textit{Oscillatory integrals and unique continuation for second order elliptic differential equations}, 
J. Amer. Math. Soc. \textbf{2} (1989), 491--515.

\bibitem{Sog2}
C.~D.~Sogge, \textit{Strong uniqueness theorems for second order elliptic differential equations}, 
Amer. J. Math. \textbf{112} (1990), 943--984.

\bibitem{Sog3}
C.~D.~Sogge, \textit{Fourier integrals in classical analysis}, Cambridge University Press, 1993.

\bibitem{Stein}
E.~M.~Stein, \textit{Harmonic Analysis: real variable methods, orthogonality and oscillatory integrals}, Princeton University Press, 1993.


\end{thebibliography}
\end{document}